\definecolor{myurlcolor}{rgb}{0,0,0.7}
\newtheorem{rmq}{Remark}[section]
\newtheorem{lem}{Lemma}[section]
\newtheorem{thm}{Theorem}[section]
\newtheorem{Ass}{Assumption}[section]
\newcommand{\bprof}{\begin{prof}}
\newcommand{\eprof}{\end{prof}}
\newenvironment{prof}[1][Proof]{\textbf{#1.} }{\ \rule{0.5em}{0.5em}}
\newcommand{\bea}{\begin{eqnarray}}
\newcommand{\eea}{\end{eqnarray}}
\newcommand{\beq}{\begin{equation}}
\newcommand{\eeq}{\end{equation}}
\newcommand{\enn}{\nonumber \end{equation}}
\newcommand{\beqs}{\begin{eqnarray*}}
\newcommand{\eeqs}{\end{eqnarray*}}
 \newcommand{\cE}{\mathcal{E}}
\newcommand{\cT}{\mathcal{T}}
\def\cN{{\mathcal N}}
\newcommand{\curl}{\mathop{\rm curl}\nolimits}
\newcommand{\dive}{\mathop{\rm div}\nolimits}
\def\bv{\textbf{v}}
\title[A new unified stabilized mixed finite element method]
{A new unified stabilized mixed finite element method of the Stokes-Darcy coupled problem: Isotropic discretization}
\author{ Hou\'edanou Koffi Wilfrid $^{(a)}$}
\email{a) khouedanou@yahoo.fr}
\address{D\'epartement de Math\'ematiques,
Universit\'e d'Abomey-Calavi (UAC), Rep. of Benin}
\begin{document}

\maketitle
\begin{Large}
%\twocolumn[10] 
\begin{abstract}\Large
In this paper we develop an a priori error analysis of a new unified mixed finite element method for the
coupling of fluid flow with porous media flow in $\mathbb{R}^N$, $N\in\{2,3\}$ on isotropic meshes.
Flows are governed by the Stokes and Darcy equations, respectively, and the corresponding transmission conditions are given by mass conservation, balance of normal forces, and the Beavers-Joseph-Saffman law.
	%In this paper, we develop and analyze a new unified approximation of the velocity-pressure pair for the Stokes-Darcy coupled problem across the interface by the Beavers-Joseph-Saffman conditions.
%In this paper we develop  an a priori  error analysis of a nonconforming mixed finite element method for the coupling of fluid flow with porous media flow. 
The approach utilizes a modification of the Darcy problem which allows us to apply a variant nonconforming Crouzeix-Raviart finite element to the whole coupled Stokes-Darcy problem. 
The well-posedness of the finite element scheme and its convergence analysis are derived. Finally, the numerical experiments are presented, which confirm the excellent stability and accuracy of our method.
%The proposed method is probably one of the cheapest method for Discontinuous Galerkin (DG) approximation of the coupled system.  optimal accuracy with respect to solution regularity, and  a new variational formulation of Stokes-Darcy coupled problem studied in \cite{AHN:15}. A discrete coercivity and  inf-sup conditions are derived. and an optimal a priori error estimate are derived. Finally, some numerical examples verifying the theorical predictions are presented.
\\
\textbf{Keywords : } Coupled Stokes and Darcy flows; Nonconforming finite element method;  Crouzeix-Raviart element.\\
{\bf Mathematics Subject Classification [MSC]:} 74S05, 74S10, 74S15,
74S20, 74S25, 7430.
\end{abstract}

%\twocolumn
\tableofcontents
\section{Introduction}
There are many serious problems currently facing the world in which the coupling between groundwater and surface water is 
important. These include questions such as predicting how pollution discharges into streams, lakes, and rivers making its way into 
the water supply. This coupling is also important in technological applications involving filtration.
We  refer to the nice overview \cite{27} and the references therein for its physical background, modeling, and standard numerical 
methods. One important issue in the modeling of the coupled Darcy-Stokes flow is the treatement of the interface 
condition, where the  Stokes fluid meets the porous medium. In this paper, we only consider the so-called Beavers-Joseph-Saffman condition, which was experimentally derived by Beavers and Joseph in \cite{23}, modified by Saffman in \cite{44}, 
and later mathematically justified in \cite{32,17,48,37}.

It is well known that the discretization of the velocity and the pressure, for both Stokes and Darcy problems and the coupled of them, has to be made in a compatible way in order to avoid instabilities. Since, usually, stable elements for the free fluid flow cannot been successfully applied to the porous medium flow, most of the finite element formulations developed for the Stokes-Darcy coupled problem are based on appropriate combinations of stable elements for the Stokes equations with stable elements for the Darcy equations. In \cite{HA:2016,DI:09,AHN:15, 12, 18, 19, 7, 21, 45, 30, 32, 48, JM:2000,39,43,49,47,GL:2018,JAFH:2018,RJZY:2015, HJA:2017}, and in the references therein, we can find a large list of contributions 
devoted to numerically approximate the solution of this interaction problem, including conforming and nonconforming methods.

There are a lot of papers considering different finite element spaces
in each flow region (see, for example, \cite{29,46,47} and the references therein). In contrast to this, other articles use the same finite element spaces in both regions by, in general, introducing some penalizing terms (ref. for examples \cite{GL:2018, 7, AHN:15} and the references therein). 

In \cite{GL:2018}, a  conforming unified finite element has been proposed for the modified coupled Stokes-Darcy problem in a plane domain, which has simple and straightforward implementations. The authors apply the classical Mini-element to the whole coupled Stokes-Darcy coupled problem. An a priori error analysis is performed with some numerical tests confirming the convergence rates.

In this article, we
propose a modification of the Darcy problem which allows us to apply a variant nonconforming finite element to the whole coupled Stokes-Darcy problem. We use a variant nonconforming Crouzeix-Raviart finite element method that has so many advantages for the velocities and piecewise constant for the pressures in both the Stokes and Darcy regions, and apply a stabilization term penalizing the jumps over the element edges of the piecewise continuous velocities. We prove that the formulation satisfies the discrete inf-sup conditions, obtaining as a result optimal accuracy with respect to solution regularity. Numerical experimants are also presented, which confirm the excellent stability and optimal performance of our method. The difference between our paper and the reference \cite{GL:2018} is that our discretization is nonconforming in both the Stokes domain and Darcy domain (in $ \Omega\subset\mathbb{R}^N$, $N=2 \mbox{ or } 3$). As a result, additional terms are included in the priori error analysis that measure the non-conformity of the method. One essential difficulty in choosing the unified discretization is that, the Stokes side velocity is in $H^1$ while the Darcy side velocity is only in $H(\dive)$. Thus, we introduce a variant of the nonconforming Crouzeix-Raviart piecewise linear finite element space (larger than the space $\textbf{H}_h$ used in \cite{7}). 
The choice of $\textbf{H}_h$ [see (\ref{espacevitesse})] is more natural than the one introduced in \cite{7} since the space $\textbf{H}_h$
approximates only $H(\dive, \Omega_d)$ and not  $[H^1(\Omega_d)]^N$, while our a priori error analysis is only valid in this larger space.

The rest of the paper is organized as follows. In Section \ref{preliminaire} we present the modified coupled Stokes-Darcy problem in $\Omega\subset\mathbb{R}^N$, $N=2\mbox{ or } 3$, notations and the weak formulation. Section \ref{priori} is devoted to the finite element discretization and the error estimation. Finally, in Section \ref{test}, we present the results of numerical experiments to verify the predicted rates of convergence. 

\section{Preliminaries and notation}\label{preliminaire}
\subsection{Model problem}\label{soussection1}
We consider the model of a flow in a bounded domain $\Omega\subset \mathbb{R}^N$ $(N=2 \mbox{  or  } 3)$, consisting of a 
porous medium domain $\Omega_d$, where the flow is a Darcy flow, and an  open region $\Omega_s=\Omega\smallsetminus 
\overline{\Omega}_d,$ where the flow is governed by the Stokes equations. The two regions are separated by an interface 
$\Gamma_I=\partial \Omega_d\cap \partial \Omega_s.$ Let $\Gamma_l=\partial \Omega_l\smallsetminus \Gamma_I$, $l=s,d$.
Each interface and boundary is assumed to be polygonal $(N=2)$ or polyhedral 
$(N=3)$. We denote by $\textbf{n}_s$ (resp. $\textbf{n}_d$) the 
unit outward normal vector along $\partial \Omega_s$  (resp. $\partial \Omega_d$). Note that on the interface 
$\Gamma_I$, we have $\textbf{n}_s=-\textbf{n}_d$. 
The Figures \ref{Fc1} and \ref{Fc2}  give a schematic representation of the geometry.
\begin{figure}[http]\label{gdomaine1}
	\begin{minipage}[c]{.49\textwidth}
		\centering
		%\begin{block}{}
		\begin{center}
			\tikzstyle{grisEncadre}=[thick, dashed, fill=gray!20]
			\begin{tikzpicture}[scale=0.85]
			color=gray!100;
			\draw [very thick](1,1)--(7,1);
			\draw [very thick](1,1)--(1,5.5);
			\draw [very thick](1,5.5)--(7,5.5);
			\draw [very thick](7,1)--(7,5.5);
			\draw [very thick](1,3)--(7,3);
			\draw [black,fill=gray!30] (1,1) -- (7,1) -- (7,3) --(1,3) -- cycle;
			%\draw [grisEncadre](1,1) rectangle (7,3);
			\draw (3.7,1.6) node [above]{$\mbox{ \small\small $\Omega_d$:  \textbf{\small \small Porous Medium} }$};
			\draw (3.7,4) node [above]{$\mbox{  $\Omega_s$:  \textbf{\small \small Fluid Region} }$};
			\draw [>=stealth,->] [line width=1pt](2,3)--(2,3.5) node [right]{$\textbf{n}_d$};
			\draw [>=stealth,->] [line width=1pt](6,3)--(6,2.5) node [right]{$\textbf{n}_s$};
			\draw [>=stealth,->] [line width=1pt](4,2.8)--(4.7,2.8) node [below]{$\tau_j$};
			\draw  (4.5,3.8) node [below]{$\Gamma_I$};
			\draw[line width=0.5pt](1,1)--(1,3) node[midway,above,sloped]{$\Gamma_d$};
			\draw[line width=0.5pt](1,1)--(7,1) node[midway,below,sloped]{$\Gamma_d$};
			\draw[line width=0.5pt](7,1)--(7,3) node[midway,below,sloped]{$\Gamma_d$};
			\draw[line width=0.5pt](1,3)--(1,5.5) node[midway,above,sloped]{$\Gamma_s$};
			\draw[line width=0.5pt](1,5.5)--(7,5.5) node[midway,above,sloped]{$\Gamma_s$};
			\draw[line width=0.5pt](7,3)--(7,5.5) node[midway,below,sloped]{$\Gamma_s$};
			\end{tikzpicture}
		\end{center}
		\caption{\footnotesize{A sketch of the geometry of the problem (case: $\partial \Omega_d\neq \Gamma_I$)}}
		\label{Fc1}
		%\end{figure}
		%\end{block}
	\end{minipage}
	%\pause
	%\hspace{.1\textwidth}%
	\begin{minipage}[c]{.45\textwidth}
		%\vspace{-.25cm}
		\centering
		%\begin{block}{}
		%\begin{figure}[htpb]
		\begin{center}
			\tikzstyle{grisEncadre}=[thick, dashed, fill=gray!20]
			\begin{tikzpicture}[scale=0.85]
			color=gray!100;
			\draw [very thick](1,1)--(7,1);
			\draw [very thick](1,1)--(1,5.5)node[midway,above,sloped]{$\Gamma_s$};
			\draw [very thick](1,5.5)--(7,5.5);
			\draw [very thick](7,1)--(7,5.5);
			\draw (2,2)--(2,4)node[midway,above,sloped]{$\Gamma_d$};
			\draw (2,2)--(6,2);
			\draw (6,2)--(6,4);
			\draw (2,4)--(6,4);
			%\draw[line width=0.5pt](1,1)--(1,3) node[midway,above,sloped]{$\Gamma_d$};
			\draw (4,4.5) node [above]{$\mbox{  $\Omega_s$:  \textbf{\small \small Fluid Region} }$};
			\draw [black,fill=gray!30] (2,2) -- (2,4) -- (6,4) --(6,2) -- cycle;
			\draw (4.,3.2) node {\small \small $\Omega_d:\mbox{\textbf{\small\small \small Porous Medium}}$};
			\draw [>=stealth,->] [line width=1pt](4,2)--(4,2.5) node [right]{$\textbf{n}_s$};
			\draw [>=stealth,->] [line width=1pt](4,1.8)--(4.7,1.8) node [below]{$\tau_j$};
			%\draw[line width=0.5pt](1,3)--(7,1) node[midway,above,sloped]{$\Gamma_s$};
			%\draw [grisEncadre](2,2) rectangle (6,4);
			%\draw [black,fill=gray!20] (0,0) -- (3,1) -- (1,-2) -- cycle;
			\end{tikzpicture}
		\end{center}
		\caption{\footnotesize{ A sketch of the geometry of the problem (case $\partial\Omega_d=\Gamma_I.$)}}
		\label{Fc2}
	\end{minipage}
\end{figure}\mbox{  }

For any function $v$ defined in 
$\Omega$, since its restriction to  $\Omega_s$ or to $\Omega_d$ could play a different mathematical 
roles (for instance their traces on $\Gamma_I$), we will set
$v_s=v_{|\Omega_s}$ and $v_d=v_{|\Omega_d}$.

In  $\Omega$, we denote   by $\textbf{u}$ the fluid velocity and  by $p$ the pressure.
The motion of the  fluid  in $\Omega_s$ is described by the Stokes equations
\begin{eqnarray}\label{I1}
\left\{
\begin{array}{ccccccccc}\label{r}
-2\mu\dive \textbf{D}(\textbf{u})+ \nabla p &=&\textbf{f} &\mbox{ in }&  &\Omega_s,&\\
\dive \textbf{u}&=&g  &\mbox{  in   }&  &\Omega_s,&\\
%\textbf{u}_1&=&\textbf{0}  &\mbox{  sur  }&  &\Gamma_1& \mbox{   }(\mbox{pas de glissement})
\textbf{u}&=&\textbf{0}  &\mbox{  on  }&  &\Gamma_s,&
\end{array}
\right.
\end{eqnarray}
while in the porous medium $\Omega_d$, by Darcy's law
\begin{eqnarray}\label{I2}
\left\{
\begin{array}{cccccccccccc}\label{rDarcy}
\mu \textbf{K}^{-1}\mathbf{u}+\nabla p
&=&\textbf{f}  &\mbox{ in }&   &\Omega_d,&\\
\dive \textbf{u}&=& g  &\mbox{ in  }&  &\Omega_d,& \\
\textbf{u}\cdot\textbf{n} _d& =& 0  &\mbox{  on }&   &\Gamma_d.&
\end{array}
\right.
\end{eqnarray}
Here, $\mu> 0$ is the fluid viscosity, $\textbf{D}$ the deformation rate tensor   defined by 
\begin{eqnarray*}
	\textbf{D}(\psi)_{ij}:=\frac{1}{2} \left(\frac{\partial \psi_i}{\partial x_j}+
	\frac{\partial \psi_j}{\partial x_i}\right), \mbox{   } 1\leqslant i,j\leqslant N,
\end{eqnarray*} 
and $\textbf{K}$  a symmetric and uniformly positive definite tensor representing the rock permeability and satisfying, for 
some constants 
$0< K_*\leqslant K^*< +\infty$, 
$$K_*\xi^{T}\xi\leqslant \xi^{T}\textbf{K}(x)\xi\leqslant K^*\xi^{T}\xi, \mbox{     }
\forall x\in\Omega_d, \mbox{  } \xi\in \mathbb{R}^N.$$
$\textbf{f}\in [L^2(\Omega)]^N$ is a term related to body forces and  $g\in L^2(\Omega)$ a
source or sink term satisfying the compatibility condition 
\begin{eqnarray*}
	\int_{\Omega} g(x) dx =0.
\end{eqnarray*}
Finally we consider the following  interface conditions on $\Gamma_I:$
\begin{eqnarray} \label{cd1}
%\left\{
%\begin{array}{ccccccccc} 
\textbf{u}_s\cdot \textbf{n}_s+\mathbf{u}_d\cdot \textbf{n}_d&=&0, \\
\label{cd2}
p_s-2\mu \textbf{n}_s\cdot\textbf{D}(\textbf{u}_s)\cdot \textbf{n}_s&=&p_d,\\
\label{cd3}
\frac{\sqrt{\kappa_j}}{\alpha_1} 2\textbf{n}_s\cdot\textbf{D}(\textbf{u}_s)\cdot\tau_j&=&-\textbf{u}_s\cdot\tau_j, \mbox{  } j=1,\ldots,N-1.
%\end{array}
%\right.
\end{eqnarray}
Here,  Eq. (\ref{cd1}) represents mass conservation, Eq. (\ref{cd2}) the balance of normal forces, and Eq. (\ref{cd3}) the
Beavers-Joseph-Saffman conditions. Moreover, $\{\tau_j\}_{j=1,\ldots,N-1}$ denotes an orthonormal system of tangent vectors
on $\Gamma_I$, $\kappa_j=\tau_j\cdot\textbf{K}\cdot\tau_j$, and $\alpha_1$ is a parameter determined by experimental evidence. 

Eqs. (\ref{I1}) to (\ref{cd3}) consist of the model of the coupled Stokes and Darcy flows problem that we will study below.
\subsection{New weak formulation}
We begin this subsection by introducing some useful notations. If $W$ is a bounded domain of $\mathbb{R}^N$ and $m$ is a non negative integer, the Sobolev space $H^m(W)=W^{m,2}(W)$ is 
defined in the usual way with the usual norm $\parallel\cdot\parallel_{m,W}$ and semi-norm $|\cdot|_{m,W}$. In particular, $H^0(W)=L^2(W)$ and we write $\parallel\cdot\parallel_W$ for $\parallel\cdot\parallel_{0,W}$. Similarly we   denote by $(\cdot,\cdot)_{W}$  the $L^2(W)$ $[L^2(W)]^N$ or $ [L^2(W)]^{N\times N}$ inner product. For shortness if $W$ is equal to $\Omega$, we will drop  the index $\Omega$, while  for any $m\geq 0$, 
$\parallel\cdot\parallel_{m,l}=\parallel\cdot\parallel_{m,\Omega_l}$, $|\cdot|_{m,l}=|\cdot|_{m,\Omega_l}$ 
and $(.,.)_l=(\cdot,\cdot)_{\Omega_l}$, for $l=s,d$.
The space  $H_0^m(\Omega)$ denotes the closure of $C_0^{\infty}(\Omega)$ in $H^{m}(\Omega)$. Let $[H^m(\Omega)]^N$ be the space of
vector valued functions $\textbf{v}=(v_1,\ldots,v_N)$ with components  $v_i$ in $H^m(\Omega)$. The 
norm and the seminorm on $[H^m(\Omega)]^N$ are given by 
\begin{eqnarray}
\parallel\textbf{v}\parallel_{m,\Omega}\hspace*{0.2cm}:=\hspace*{0.2cm}\left(\sum_{i=0}^N\parallel v_i\parallel_{m,\Omega }^2\right)^{1/2} 
\mbox{  and    } \hspace*{0.2cm}|\textbf{v}|_{m,\Omega}\hspace*{0.2cm}:=\hspace*{0.2cm}\left(\sum_{i=0}^N |v_i|_{m,\Omega}^2\right)^{1/2}.
\end{eqnarray}
For a connected open subset of the boundary $\Gamma\subset \partial \Omega_s\cup\partial \Omega_d$, we write 
$\langle.,.\rangle_{\Gamma}$ for the $L^2(\Gamma)$ inner product (or duality pairing), that is, for scalar 
valued functions $\lambda$, 
$\eta$ one defines:
\begin{eqnarray}
\langle\lambda,\eta\rangle_{\Gamma}&:=&\int_{\Gamma} \lambda(s)\cdot\eta(s) ds
\end{eqnarray}

We also define  the special vector-valued functions space 
\begin{eqnarray}
\textbf{H}(\dive,\Omega)&:=&\left\{\textbf{v}\in [L^2(\Omega)]^N: \dive \textbf{v} \in L^2(\Omega)\right\}
\end{eqnarray}

To give the variational formulation of our coupled problem we define the following two spaces for the velocity and the pressure:
\begin{eqnarray*}
	\textbf{H}&:=&\left\{\textbf{v}\in \textbf{H}(\dive,\Omega): \textbf{v}_s\in [H^1(\Omega_s)]^N,
	\textbf{  } \textbf{v}=\textbf{0} \mbox{  on  } \Gamma_s\mbox{  and   } \textbf{v}\cdot\textbf{n}_d=0 \mbox{  on  }\Gamma_d\right\}
\end{eqnarray*}
equipped with the norm 
\begin{eqnarray}
\parallel\textbf{v}\parallel_{\textbf{H}}&:=&\left(|\textbf{v}|_{1,s}^2+\parallel\textbf{v}\parallel_{d}^2+
\parallel\dive \textbf{v}\parallel_{d}^2\right)^{1/2},
\end{eqnarray}
and 
\begin{eqnarray}
Q=L_0^2(\Omega):=\left\{q\in L^2(\Omega): \int_{\Omega} q(x) dx=0\right\}.
\end{eqnarray}
%Note that the vector valued functions in $\textbf{H}$ have (weakly) continuous normal components on $\Gamma_I$ 

Multiplying the first equation of (\ref{I1}) by a test fonction $\bf v\in \bf H$ and the second one by $q\in Q$, integrating by parts over $\Omega_s$ the terms involving $\dive \textbf{D}(\textbf{u})$ and $\nabla p$, yield the variational form of Stokes equations:
\begin{eqnarray}\nonumber\label{F9}
(\textbf{f}_s,\textbf{v}_s)_{\Omega_s}&=&2\mu\left(\textbf{D}(\textbf{u}_s),\textbf{D}(\textbf{v}_s)\right)_{\Omega_s}-
\left(p_s,\mbox{div}\mbox{ }\textbf{v}_s\right)_{\Omega_s}\\
&+&\left(\left\{p_s-2\mu\textbf{n}_s\cdot\textbf{D}(\textbf{u}_s)\cdot\textbf{n}_s\right\},
\textbf{v}_s\cdot\textbf{n}_s\right)_{\Gamma_I}\\\nonumber
&+&\displaystyle\sum_{j=1}^{N-1}\left(-2\mu\textbf{n}_s\cdot\textbf{D}(\textbf{u}_s)\cdot\tau_j,\textbf{v}_s\cdot\tau_j
\right)_{\Gamma_I}\\
-\left(q_s,\mbox{div}\mbox{ }\textbf{u}_s\right)_{\Omega_s}&=&-(g_s,q_s)_{\Omega_s}
\end{eqnarray}
Using interface conditions $(\ref{cd2})$ and $(\ref{cd3})$ in (\ref{F9}), we obtain:
\begin{eqnarray}\nonumber
(\textbf{f}_s,\textbf{v})_{\Omega_s}&=&2\mu\left(\textbf{D}(\textbf{u}_s),\textbf{D}(\textbf{v})\right)_{\Omega_s}-
\left(p_s,\mbox{div}\mbox{ }\textbf{v}\right)_{\Omega_s}\\
&+& ( p_d,\textbf{v}_s\cdot\textbf{n}_s)_{\Gamma_I}+
\displaystyle\sum_{j=1}^{N-1}\frac{\mu\alpha_1}{\sqrt{k_j}}(\textbf{u}_s\cdot\tau_j,\textbf{v}_s\cdot\tau_j
)_{\Gamma_I}\mbox{     }\forall \textbf{v}\in\textbf{H}\\
-\left(q,\mbox{div}\mbox{ }\textbf{u}_s\right)_{\Omega_s}&=&-(g_s,q)_{\Omega_s}\mbox{    } \forall q\in Q
\end{eqnarray}
We apply a similar treatment to the Darcy equations by testing the first equation of (\ref{I2}) with a smooth fonction $\textbf{v}\in \textbf{H}$ and the second on by $q\in Q$, integrating by parts over $\Omega_d$ the terms involving $\nabla p_d$, yield the variational form of Darcy equations:
\begin{eqnarray}\label{F12}
\left(\mu\textbf{K}^{-1}\textbf{u}_d,\textbf{v}\right)_{\Omega_d}&=& \left(p_d,\mbox{div}\mbox{ }\textbf{v}\right)_{\Omega_d}
+(\textbf{f}_d,\textbf{v})_{\Omega_d}-
( p_d,\textbf{v}_d\cdot\textbf{n}_d)_{\Gamma_I} \mbox{    }\forall \textbf{v}\in \textbf{H}\\
\left(\mbox{div}\mbox{ }\textbf{u}_d,q\right)_{\Omega_d}&=&\left(g_d,q\right)_{\Omega_d}\mbox{   } 
\forall q\in Q
\end{eqnarray}
Now, incorporating the first boundary interface condition (\ref{cd1}) and taking into account that the vector valued functions in $\textbf{H}$ have (weakly) continuous normal components on $\Gamma_I$ (see 
  \cite[Theorem 2.5]{girault:86}), the mixed variational formulation of the coupled problem (\ref{I1})-(\ref{cd3}) can be stated as follows \cite{AHN:15}: Find $(\textbf{u},p)\in\textbf{H}\times Q$ that satisfies
  \begin{equation}\label{la1}
  \left\{
  \begin{array}{ccc}
  \textbf{a}(\textbf{u},\textbf{v})+\textbf{b}(\textbf{v},p)&=& L(\bv),
  \hspace*{1cm}\forall \textbf{v}\in \textbf{H},\\
  %\label{la2}
  \textbf{b}(\textbf{u},q)&=& G (q),\hspace*{1cm} \forall q\in Q.
  \end{array}
  \right.
  \end{equation}
  where the bilinear forms $\textbf{a}(\cdot,\cdot)$ and $\textbf{b}(\cdot,\cdot)$ are defined on $\textbf{H}\times \textbf{H}$ and $\textbf{H}\times Q$, respectively, as:
\begin{eqnarray*}
	\textbf{a}(\textbf{u},\textbf{v})&:=&2\mu(\textbf{D}(\textbf{u}),\textbf{D}(\textbf{v}))_s+
	\sum_{j=1}^{N-1}\frac{\mu\alpha_1}{\sqrt{\kappa_j}}\left\langle \textbf{u}_s\cdot\tau_j,
	\textbf{v}_s\cdot\tau_j\right\rangle_{\Gamma_I}+\mu\left(\textbf{K}^{-1}\textbf{u},\textbf{v}\right)_d
	\\
	\textbf{b}(\textbf{v},q)&:=&-\left(q,\dive \textbf{v}\right)_{\Omega_s}-
\left(q,\dive \textbf{v}\right)_{\Omega_d}
	\end{eqnarray*}
By last, the linear forms $L$ and $G$ are defined as:
\begin{eqnarray*}
	L(\textbf{v}):=(\textbf{f},\textbf{v})_{\Omega_s}+(\textbf{f},\textbf{v})_{\Omega_d}
	\hspace*{2cm } \mbox{  and   } \hspace*{2cm} G(q):=-(g,q)_{\Omega_s}-(g,q)_{\Omega_d}.
\end{eqnarray*}

It is easy to prove that $\textbf{a}$ et $\textbf{b}$ are continuous, $\textbf{b}$ satisfies the continuous inf-sup condtion and $\textbf{a}$ is coercive on the null space of $\textbf{b}$. It is also clear that $F$ and $G$ are continuous and bounded. Then, using the classical theory of mixed methods (see, e.g., \cite[Theorem and Corollary 4.1 in Chapter I]{girault:86})  it follows the well-posedness of the continuous formulation (\ref{la1}) and so the following theorem holds \cite{AHN:15}:
\begin{thm} If $\textbf{f}\in [L^2(\Omega)]^N$ and $g\in L_0^2(\Omega)$, 
	there exists a unique solution $(\textbf{u},p)\in \textbf{H}\times Q$ to the problem (\ref{la1}).\\
\end{thm}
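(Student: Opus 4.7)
The plan is to verify the hypotheses of the classical Babu\v{s}ka--Brezzi theory for mixed variational problems (as cited from \cite{girault:86}), applied to the bilinear forms $\textbf{a}(\cdot,\cdot)$ and $\textbf{b}(\cdot,\cdot)$ on the product space $\textbf{H}\times Q$ endowed with $\parallel\cdot\parallel_{\textbf{H}}$ and $\parallel\cdot\parallel_{0,\Omega}$.

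First I would check that $L$ and $G$ are bounded linear functionals on $\textbf{H}$ and $Q$ respectively: Cauchy--Schwarz on $(\textbf{f},\textbf{v})_{\Omega_s}+(\textbf{f},\textbf{v})_{\Omega_d}$ together with $\textbf{f}\in [L^2(\Omega)]^N$ and the definition of $\parallel\cdot\parallel_{\textbf{H}}$ gives $|L(\textbf{v})|\lesssim \parallel\textbf{f}\parallel_{0,\Omega}\parallel\textbf{v}\parallel_{\textbf{H}}$, and similarly for $G$. Next I would verify continuity of $\textbf{a}$ and $\textbf{b}$: for $\textbf{a}$, the first term is bounded by Cauchy--Schwarz using $|\textbf{D}(\textbf{v})|_{0,s}\leq |\textbf{v}|_{1,s}$; the interface tangential term is controlled using the continuous trace inequality $\parallel\textbf{v}_s\cdot\tau_j\parallel_{0,\Gamma_I}\lesssim \parallel\textbf{v}_s\parallel_{1,\Omega_s}$; and the Darcy term is bounded using the uniform upper bound $K^*$ on $\textbf{K}^{-1}$. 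Continuity of $\textbf{b}$ is immediate from Cauchy--Schwarz and the definition of $\parallel\cdot\parallel_{\textbf{H}}$ (note that $\dive \textbf{v}_s \in L^2(\Omega_s)$ since $\textbf{v}_s\in [H^1(\Omega_s)]^N$).

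Then I would prove the kernel coercivity of $\textbf{a}$. Define $\textbf{V}:=\{\textbf{v}\in \textbf{H}:\textbf{b}(\textbf{v},q)=0\;\;\forall q\in Q\}$. For $\textbf{v}\in\textbf{V}$, since $\textbf{b}(\textbf{v},q)=-(q,\dive\textbf{v})_{\Omega}$ vanishes for all $q\in Q=L_0^2(\Omega)$, we deduce that $\dive\textbf{v}$ is constant on $\Omega$, and this constant must be zero because of the boundary conditions built into $\textbf{H}$ (integration of $\dive\textbf{v}$ over $\Omega$ equals the flux across $\partial\Omega$, which vanishes). Hence $\parallel\dive\textbf{v}\parallel_{0,d}=0$ on the kernel, and the $\textbf{H}$-norm reduces to $(|\textbf{v}|_{1,s}^{2}+\parallel\textbf{v}\parallel_{0,d}^{2})^{1/2}$. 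The first term $2\mu(\textbf{D}(\textbf{v}),\textbf{D}(\textbf{v}))_s$ controls $|\textbf{v}|_{1,s}^2$ via Korn's inequality (valid since $\textbf{v}_s=0$ on $\Gamma_s$), while the third term controls $\parallel\textbf{v}\parallel_{0,d}^2$ through the lower bound $K_*$ on $\textbf{K}$; the tangential interface term is nonnegative and can simply be dropped. Putting these together yields $\textbf{a}(\textbf{v},\textbf{v})\geq \alpha\parallel\textbf{v}\parallel_{\textbf{H}}^2$ for some $\alpha>0$.

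The main obstacle, as expected, will be the inf-sup condition for $\textbf{b}$: given any $q\in Q$, one must construct $\textbf{v}\in\textbf{H}$ with $-(\dive\textbf{v},q)_{\Omega}\gtrsim \parallel q\parallel_{0,\Omega}\parallel\textbf{v}\parallel_{\textbf{H}}$. The standard strategy is to use the surjectivity of the divergence operator $\dive:[H_0^1(\Omega)]^N\to L_0^2(\Omega)$: for $q\in Q$ there exists $\textbf{w}\in [H_0^1(\Omega)]^N$ with $\dive\textbf{w}=-q$ and $\parallel\textbf{w}\parallel_{1,\Omega}\lesssim \parallel q\parallel_{0,\Omega}$. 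Such $\textbf{w}$ lies in $\textbf{H}$ (since $\textbf{w}_s\in [H^1(\Omega_s)]^N$ with vanishing trace on $\Gamma_s$, $\textbf{w}\cdot\textbf{n}_d=0$ on $\Gamma_d$, and $\textbf{w}$ has continuous normal trace across $\Gamma_I$), and $\parallel\textbf{w}\parallel_{\textbf{H}}\lesssim \parallel\textbf{w}\parallel_{1,\Omega}\lesssim \parallel q\parallel_{0,\Omega}$, giving the desired inf-sup constant. With all three hypotheses verified, \cite[Theorem and Corollary 4.1, Chapter I]{girault:86} yields existence and uniqueness of $(\textbf{u},p)\in\textbf{H}\times Q$ solving (\ref{la1}), completing the proof.
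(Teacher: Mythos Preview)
Your proposal is correct and follows exactly the route the paper indicates: verify continuity of $\textbf{a}$, $\textbf{b}$, $L$, $G$, kernel coercivity of $\textbf{a}$ via Korn's inequality and the bounds on $\textbf{K}$, and the inf-sup condition for $\textbf{b}$ via the surjectivity of $\dive:[H_0^1(\Omega)]^N\to L_0^2(\Omega)$, then invoke \cite[Theorem and Corollary 4.1, Chapter~I]{girault:86}. The paper itself only asserts these properties without detail and refers to \cite{AHN:15}, so your write-up simply fills in what the paper leaves implicit; the one cosmetic slip is that the continuity of the Darcy term uses the upper bound $1/K_*$ on $\textbf{K}^{-1}$ (coming from the lower bound $K_*$ on $\textbf{K}$), not ``the upper bound $K^*$ on $\textbf{K}^{-1}$''.
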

\begin{rmq}
Note that if $g$ is of mean zero, (\ref{la1}) directly implies that (\ref{I1}), (\ref{I2}) and (\ref{cd1}) hold ( 
the differential equations being understood in the distributional sense), while the interface conditions 
(\ref{cd2}) and (\ref{cd3}) are imposed in a weak sense. Also, we observe that the mixed variational formulation of the coupled problem (\ref{I1})-(\ref{cd3}) is equivalent to weak formulation (2.4) (and also (2.5) of \cite{12}), with the particularity that, in our case, for any $\textbf{v}\in \textbf{H}$, we have that 
$\left< \textbf{v}_s-\textbf{v}_d, \textbf{n}_sp_s\right>_{\Gamma_I}=0$.
\end{rmq}

Now we introduce a modification to the Darcy equation, with the purpose in mind of the development of a unified discretization for the coupled problem, that is, the Stokes and Darcy parts be discretized using the same finite element spaces. The modification that we apply to the Darcy equation follows the idea (same argument) given in \cite{GL:2018}. Indeed, we observe that taking the second equation of Darcy' problem (\ref{I2}) we can write, for any $\textbf{v}\in \textbf{H},$
\begin{equation}
	\int_{\Omega_d}(\dive \textbf{u}_d-g_d)\dive\textbf{v}=0.
	\end{equation}
Then, by adding this equation to the first equation of the variational form in (\ref{F12}), we get:
\begin{eqnarray}\label{Fm}
\left(\mu\textbf{K}^{-1}\textbf{u}_d,\textbf{v}\right)_{\Omega_d}+\left(\dive\textbf{u}_d,\dive \textbf{v}\right)_{\Omega_d}-\left(p_d,\dive\textbf{v}\right)_{\Omega_d}\\\nonumber
+( p_d,\textbf{v}_d\cdot\textbf{n}_d)_{\Gamma_I}&=&
(\textbf{f}_d,\textbf{v})_{\Omega_d}+(\dive \textbf{v},g_d)_{\Omega_d}
 \mbox{    }\forall \textbf{v}\in \textbf{H}\\
\left(\mbox{div}\mbox{ }\textbf{u}_d,q\right)_{\Omega_d}&=&\left(g_d,q\right)_{\Omega_d}\mbox{   } 
\forall q\in Q
\end{eqnarray}
From now on, we work with this modified variational form of Darcy equations.

In the same way that before, incorporating the boundary conditions (\ref{cd1}) and remambering that, since $\textbf{v}\in\textbf{H}$, it  was (weakly) continuous normal components on $\Gamma_I$, the variational form of the modified Stokes-Darcy problem can be written as follows:
Find $(\textbf{u},p)\in\textbf{H}\times Q$ satisfying
\begin{equation}\label{modified}
\left\{
\begin{array}{ccc}
\tilde{\textbf{a}}(\textbf{u},\textbf{v})+\textbf{b}(\textbf{v},p)&=& \tilde{L}(\bv),
\hspace*{1cm}\forall \textbf{v}\in \textbf{H},\\
%\label{la2}
\textbf{b}(\textbf{u},q)&=& G (q),\hspace*{1cm} \forall q\in Q.
\end{array}
\right.
\end{equation}
where the bilinear forms $\tilde{\textbf{a}} (\cdot,\cdot)$ and $\textbf{b}(\cdot,\cdot)$ are defined on $\textbf{H}\times \textbf{H}$, $\textbf{H}\times Q$, respectively, as:
$$\tilde{\textbf{a}}(\textbf{u},\textbf{v})= 
2\mu(\textbf{D}(\textbf{u}),\textbf{D}(\textbf{v}))_s+
\sum_{j=1}^{N-1}\frac{\mu\alpha_1}{\sqrt{\kappa_j}}\left\langle \textbf{u}_s\cdot\tau_j,
\textbf{v}_s\cdot\tau_j\right\rangle_{\Gamma_I}+\mu\left(\textbf{K}^{-1}\textbf{u},\textbf{v}\right)_d+\left(\dive \textbf{u}_d,\dive \textbf{v}\right)_{\Omega_d}
$$
and 
$$\textbf{b}(\textbf{v},q):=-\left(q,\dive \textbf{v}\right)_{\Omega_s}-
\left(q,\dive \textbf{v}\right)_{\Omega_d}.$$
By last, the linear forms $\tilde{L}$ and $G$ are defined as:
\begin{eqnarray*}
\tilde{L}(\textbf{v}):=(\textbf{f},\textbf{v})_{\Omega_s}+(\textbf{f},\textbf{v})_{\Omega_d}+(\dive \textbf{u},\dive\textbf{v})_{\Omega_D}
	\hspace*{1cm } \mbox{  and   } \hspace*{1cm} G(q):=-(g,q)_{\Omega_s}-(g,q)_{\Omega_d}.
\end{eqnarray*}
Then, applying the classical theory of mixed methods it follows the well-posedness of the continuous formulation (\ref{modified}).
\begin{thm}
There exists a unique $(\textbf{u},p)\in \textbf{H}\times Q$ solution to modified formulation (\ref{modified}).
In addition, there exists a positive constant $\tilde{C}$, depending on the continuous inf-sup condition constant for $\textbf{b}$, the coercivity constant for $\tilde{\textbf{a}}$ and the boundedness constants for $\tilde{\textbf{a}}$  and $\textbf{b}$, such that:
\begin{equation}
	\parallel\textbf{u}\parallel_{\textbf{H}}+\parallel p\parallel_{Q}\leq \tilde{C}
	\left(\parallel \textbf{f}_s\parallel_{\Omega_s}+\parallel\textbf{f}_d\parallel_{\Omega_d}+\parallel g_d\parallel_{\Omega_d}+\parallel g_s\parallel_{\Omega_s}\right).
\end{equation}
\end{thm}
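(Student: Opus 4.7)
The plan is to invoke the classical Babuska--Brezzi theory for saddle-point problems (as in Theorem 4.1 and Corollary 4.1 of Chapter I of Girault--Raviart, already cited for the corresponding result on (\ref{la1})). Four ingredients must be verified: continuity of the two bilinear forms, continuity of the two linear forms, the continuous inf-sup condition for $\textbf{b}$, and coercivity of $\tilde{\textbf{a}}$ on the kernel of $\textbf{b}$. Three of the four come essentially for free from the well-posedness of (\ref{la1}): the form $\textbf{b}$ and the linear form $G$ are unchanged, so their continuity and the inf-sup condition carry over verbatim; continuity of $\tilde{\textbf{a}}$ follows from continuity of $\textbf{a}$ together with a single Cauchy--Schwarz bound on the added term $(\dive \textbf{u}_d, \dive \textbf{v})_{\Omega_d}$, which is dominated by $\|\textbf{u}\|_{\textbf{H}}\|\textbf{v}\|_{\textbf{H}}$; and continuity of $\tilde{L}$ follows analogously after bounding the extra contribution $(g_d, \dive \textbf{v})_{\Omega_d}$ by $\|g_d\|_{\Omega_d}\|\textbf{v}\|_{\textbf{H}}$.

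The substantive step is coercivity of $\tilde{\textbf{a}}$ on the kernel
\[
K := \{\textbf{v} \in \textbf{H} : \textbf{b}(\textbf{v}, q) = 0 \ \ \forall q \in Q\}.
\]
For $\textbf{v}\in K$, the divergence $\dive \textbf{v}$ is $L^2$-orthogonal to every mean-zero $q$ on $\Omega$, hence equal to a constant; the boundary conditions $\textbf{v}=\textbf{0}$ on $\Gamma_s$ and $\textbf{v}\cdot\textbf{n}_d = 0$ on $\Gamma_d$, combined with the divergence theorem on $\Omega$, force that constant to vanish, so $\dive \textbf{v} = 0$ almost everywhere on $\Omega$. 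Consequently the added term in $\tilde{\textbf{a}}$ vanishes on $K$, and $\tilde{\textbf{a}}(\textbf{v},\textbf{v}) = \textbf{a}(\textbf{v},\textbf{v})$ there, so coercivity of $\tilde{\textbf{a}}$ on $K$ reduces to the already-asserted coercivity of $\textbf{a}$ on the same kernel. This kernel characterization is the only step requiring genuine attention; the rest of the argument is bookkeeping built on top of the well-posedness of (\ref{la1}).

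With all four hypotheses verified, the abstract saddle-point theorem yields existence and uniqueness of $(\textbf{u}, p)\in \textbf{H}\times Q$, together with the standard a priori estimate $\|\textbf{u}\|_{\textbf{H}} + \|p\|_Q \leq \tilde{C}(\|\tilde{L}\|_{\textbf{H}'} + \|G\|_{Q'})$, in which the constant $\tilde{C}$ depends only on the continuity, inf-sup, and coercivity constants of $\tilde{\textbf{a}}$ and $\textbf{b}$. Inserting the data bounds $|\tilde{L}(\textbf{v})| \leq C(\|\textbf{f}_s\|_{\Omega_s} + \|\textbf{f}_d\|_{\Omega_d} + \|g_d\|_{\Omega_d})\|\textbf{v}\|_{\textbf{H}}$ and $|G(q)| \leq C(\|g_s\|_{\Omega_s} + \|g_d\|_{\Omega_d})\|q\|_Q$ then produces the stated inequality.
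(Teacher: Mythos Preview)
Your proposal is correct and follows exactly the approach the paper indicates: the paper does not give an explicit proof of this theorem but simply states that ``applying the classical theory of mixed methods it follows the well-posedness of the continuous formulation (\ref{modified}),'' and you have supplied the details of that invocation. One small remark: your kernel characterization (that $\dive\textbf{v}=0$ on $K$) is in fact what underlies the coercivity of $\textbf{a}$ on $K$ in the first place, since the $\|\cdot\|_{\textbf{H}}$-norm contains a $\|\dive\textbf{v}\|_d$ term; alternatively one could bypass the kernel argument entirely for $\tilde{\textbf{a}}$ by noting that $\tilde{\textbf{a}}(\textbf{v},\textbf{v})=\textbf{a}(\textbf{v},\textbf{v})+\|\dive\textbf{v}_d\|_{\Omega_d}^2\geq\textbf{a}(\textbf{v},\textbf{v})$, but either route is fine.
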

We end this section with some notation.
In $2D$, the $\curl$ of a scalar function $w$ is given as usual by 
$\curl w:=(\frac{\partial w}{\partial x_2},-\frac{\partial w}{\partial x_1})^{\top}$ while in $3D$, the $\curl $ of a vector function $\textbf{w}$
is given as usual by $\curl \textbf{w}:=\nabla \times \textbf{w}$. Finally, let $\mathbb{P}^k$ be the space of polynomials of 
total degree not larger than $k$. In order to avoid excessive use of constants, the abbreviations $x\lesssim y$ and $x\sim y$ 
stand for $x\leqslant cy$ and $c_1x\leqslant y \leqslant c_2x$, respectively, with positive constants independent of $x$, $y$ or $\cT_h$.

%\begin{figure}[htpb]
%  \centering
%\begin{center}
%\tikzstyle{grisEncadre}=[thick, dashed, fill=gray!20]
%\begin{tikzpicture}[scale=1.5]
%color=gray!100;
% \draw (1,1)--(7,1);
% \draw (1,1)--(1,5.5);
% \draw (1,5.5)--(7,5.5);
% \draw (7,1)--(7,5.5);
% \draw (1,3)--(7,3);
% \draw [grisEncadre](1,1) rectangle (7,3);
% \draw (4,1.9) node [above]{ $\Omega_d$ :};
% \draw (4,1.5) node [above] {\textbf{Saturated Porous Medium (Darcy's law)}};
% \draw (4,4) node [above]{$\mbox{  $\Omega_s$ :  \textbf{Fluid Region (Stokes flow)} }$};
 %\draw [>=stealth,->] [line width=1pt](2,3)--(2,3.5) node [right]{$\textbf{n}_d$};
 %\draw [>=stealth,->] [line width=1pt](6,3)--(6,2.5) node [right]{$\textbf{n}_s$};
 %\draw [>=stealth,->] [line width=1pt](4,2.8)--(4.7,2.8) node [below]{$\tau_j$};
 %\draw  (4.5,3.4) node [below]{$\Gamma_I$};
 %\draw[line width=0.5pt](1,1)--(1,3) node[midway,above,sloped]{$\Gamma_d$};
 %\draw[line width=0.5pt](1,1)--(7,1) node[midway,below,sloped]{$\Gamma_d$};
 %\draw[line width=0.5pt](7,1)--(7,3) node[midway,below,sloped]{$\Gamma_d$};
 
% \draw[line width=0.5pt](1,3)--(1,5.5) node[midway,above,sloped]{$\Gamma_s$};
% \draw[line width=0.5pt](1,5.5)--(7,5.5) node[midway,above,sloped]{$\Gamma_s$};
 %\draw[line width=0.5pt](7,3)--(7,5.5) node[midway,below,sloped]{$\Gamma_s$};
 %\end{tikzpicture}
%\end{center}
%\caption{\footnotesize{\Large{\textbf{A sketch of the geometry of the problem.}}}}
%\label{F1}
%\end{figure}
\section{A priori error analysis}\label{priori}
\subsection{Finite element discretization}
In this subsection, we will use a variant of  the nonconforming Crouzeix-Raviart piecewise linear finite element approximation for the
velocity and  piecewise constant approximation for the pressure. 

%We define the linear form (freedom degree)
%\begin{eqnarray}
%\sigma_i(p)&:=& \frac{1}{|E_i|}\int_{E_i} \mbox{  for } i\in\{1,2,\cdots,n\}
%\end{eqnarray}

Let $\left\{\mathcal{T}_h\right\}_{h> 0}$ be a family of triangulations of $\Omega$ with 
nondegenerate elements (i.e. triangles for $N=2$ and 
tetrahedrons for $N=3$). For any $T\in\mathcal{T}_h$, we denote by $h_T$ the diameter of $T$ and $\rho_T$ the diameter of the 
largest ball inscribed into $T$ and set 
\begin{eqnarray}
h=\max_{T\in\mathcal{T}_h} h_T, 
\mbox{   and   } \sigma_h=\max_{T\in\mathcal{T}_h } \frac{h_T}{2r_T} 
\end{eqnarray}
We assume that the family of triangulations is regular, in the sense that there exists $\sigma_0> 0$ such that
$\sigma_h\leqslant \sigma_0$,  for all  $h> 0$. We also assume   that the triangulation is conform with respect to the partition of $\Omega$ into $\Omega_s$ and $\Omega_d$, namely
each $T\in \mathcal{T}_h$ is  either in 
$\Omega_s$ or in  $\Omega_d$ (see Fig. \ref{isotropic}, \ref{adm1}, \ref{adm2}):

%\begin{figure}[http]
%	\begin{center}
%		%\includegraphics[width=.6\textwidth]{anis.png}
%		\begin{tikzpicture}[scale=0.5]
%		\draw (0,1)--(7,1);
%		\draw (0,1)--(2,4.5);
%		\draw (7,1)--(2,4.5);
%		\draw [line width=0.75pt] [>=latex,<->](0,0.75)--(7,0.75)node [midway,below,sloped] {$\mbox{diam} (T)=h_T$};
%		\draw  (2.45,2.43) circle (1.4);
%		\draw (2.45,2.2) node [above]{$\bullet$};
%		\draw (2.9,1.5) node [above]{$r_T$};
%		\draw  [line width=0.75pt](2.45,2.43)--(2.5,1);
%		\end{tikzpicture}
%	\end{center}
%	\caption{\footnotesize{Isotropic element $T$ in $2d$.}}
%	\label{isotrope}
%\end{figure}

\begin{figure}[http]
	\begin{minipage}[c]{.30\textwidth}
			\centering
		\begin{center}
		\begin{tikzpicture}[scale=0.5]
		\draw (0,1)--(7,1);
		\draw (0,1)--(2,4.5);
		\draw (7,1)--(2,4.5);
		\draw [line width=0.75pt] [>=latex,<->](0,0.75)--(7,0.75)node [midway,below,sloped] {$\mbox{diam} (T)=h_T$};
		\draw  (2.45,2.43) circle (1.4);
		\draw (2.45,2.) node [above]{$\bullet$};
		\draw (2.9,1.5) node [above]{$r_T$};
		\draw  [line width=0.75pt](2.45,2.43)--(2.5,1);
		\end{tikzpicture}
	\end{center}
\caption{\footnotesize{\small\small{Isotropic element $T$ in $2d$.}}}
\label{isotropic}
	\end{minipage}
	\begin{minipage}[c]{.30\textwidth}
		\centering
		\begin{center}
			\begin{tikzpicture}[scale=0.5]
			\draw (0,3)--(0,-3);
			\draw (0,3)--(2,0);
			\draw (0,3)--(-2,0);
			\draw (-2,0)--(2,0);
			\draw (2,0)--(0,-3);
			\draw (-2,0)--(0,-3);
			\end{tikzpicture}
		\end{center}
		\caption{\footnotesize{\small\small Example of conforming mesh in $2d$}}
		\label{adm1}
	\end{minipage}
%	\hspace{.1\textwidth}%
	\begin{minipage}[c]{.30\textwidth}
		\centering
		\begin{center}
			\begin{tikzpicture}[scale=0.5]
			\draw (0,3)--(0,0);
			\draw (0,3)--(2,0);
			\draw (0,3)--(-2,0);
			\draw (-2,0)--(2,0);
			\draw (2,0)--(0,-3);
			\draw (-2,0)--(0,-3);
			\draw (0,0)node {$\bullet$};
			\end{tikzpicture}
		\end{center}
		\caption{\footnotesize{\small\small Example of nonconforming mesh in $2d$ }}
		\label{adm2}
	\end{minipage}
\end{figure}

Let $\mathcal{T}_h^s$ and $\mathcal{T}_h^d$ be the corresponding induced triangulations of 
$\Omega_s$ and $\Omega_d$.
For any $T\in \mathcal{T}_h$, we denote by $\cE (T)$ (resp. $\cN(T))$
the set of its edges $(N=2)$ or 
faces $(N=3)$ (resp. vertices)  and set 
$\cE_h=\displaystyle\bigcup_{T\in\mathcal{T}_h} \cE(T)$, $\cN_h=\displaystyle\bigcup_{T\in\mathcal{T}_h} \cN(T)$. 
For $\mathcal{A}\subset \overline{\Omega}$ we define 
$$
\cE_h(\mathcal{A})=\left\{ E\in\cE_h: E\subset \mathcal{A}\right\}.
$$

Notice that 
$\cE_h$ can be split up in  the form 
\begin{eqnarray}\label{var}
\cE_h=\cE_h(\Omega_s^+)\cup 
\cE_h(\Omega_d)\cup \cE_h(\partial \Omega_d)
\end{eqnarray}
where
$\Omega_s^+=\Omega_s\cup \Gamma_s.$
Note that $\cE_h(\Gamma_I)$ is included in 
$\cE_h(\partial \Omega_d)$. 

With every edges $E\in\cE_h$, we associate a unit vector $\textbf{n}_E$ 
such that $\textbf{n}_E$ is orthogonal
to $E$ and equals to the unit exterior normal vector to $\partial \Omega$ if $E\subset \partial \Omega$. 
For any $E\in\cE_h$
and any piecewise continuous function $\varphi$, 
we denote by $[\varphi]_E$ its jump   across $E$ in the direction of $\textbf{n}_E$:
\begin{eqnarray*}
	[\varphi]_E(x):=
	\left\{
	\begin{array}{cccccc}\label{r}
		%\left\{
		&\displaystyle\lim_{t\rightarrow 0+} \varphi(x+t\textbf{n}_E)-\lim_{t\rightarrow 0+} \varphi (x-t\textbf{n}_E) &
		&\mbox{for an interior edge/face $E$,}&\\
		&- \displaystyle\lim_{t\rightarrow 0+}\varphi (x-t\textbf{n}_E)& &\mbox{for a boundary edge/face $E$}&
	\end{array}
	\right.
\end{eqnarray*}

\begin{figure}[http]
	\begin{center}
		\begin{tikzpicture}
		%\draw  (0,0)--(0,4); 
		%\draw (0,0)--(3,1);
		%\draw (0,4)--(3,1);
		%\draw (1.6,1.8) node {${T}$};
		%\draw [line width=0.5pt][<-,latex,<=](0.5,2) arc (0.5:170:2) node [above right] {$F_T$};
		\draw (-4,0.5)--(-2,0.5) node [midway] {$\bullet$};;
		%\draw (-4,0.5) node [midway] {$\bullet$};
		\draw (-4,0.5)--(-4,2.5) node [midway] {$\bullet$};
		\draw (-4,2.5)--(-2,0.5) node [midway] {$\bullet$};;
		%\draw (0,0) node [below] {$a_1$};
		%\draw (0,4) node [above] {$a_3$};
		%\draw (3,1) node [below] {$a_2$};
		%\draw (-3.3,1.2) node  {$\overline{T}$};
		\draw (-4,2.5) node [left]{$a_3$};
		\draw (-4,0.5) node [left]{$a_1$};
		\draw (-2,0.5) node [below]{$a_2$};
		\draw [>=stealth,->](-4,0.5)--(-4,3.5) node[left] {$y$};
		\draw [>=stealth,->](-4,0.5)--(-1,0.5) node[below] {$x$};
		\end{tikzpicture}
	\end{center}
	\caption{\footnotesize{$\mathbb{P}^1$-nonconforming finite element $T$ in $2d$.}
	}
	\label{P1nonconforme}
\end{figure}
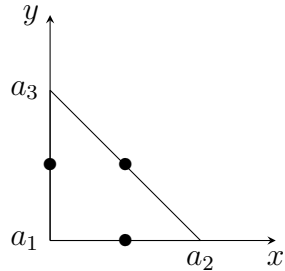\mbox{   }\\
%We denote by $Nr$ the number of vertices on $\mathcal{N}_h$ and we set
 For $i\in\{0,\cdots,N\}$,  we set:
\begin{eqnarray}
\sigma_i(p)&:=& \frac{1}{|E_i|}\int_{E_i} p, \forall p\in \mathbb{P}^1(T), \mbox{ where } E_i\in\mathcal{E}(T)
\end{eqnarray} 
The triplet
$\{T,\mathbb{P}^1(T),\Sigma\}$ with  $\Sigma=\{\sigma_i\}_{0\hspace*{0.1cm}\leqslant i \hspace*{0.1cm}\leqslant N}$ is finite element \cite[Page 83]{AE05}. The local basis functions are defined by:
\begin{eqnarray}
\psi_i(T)&=& 1-N\lambda_i(T), \hspace*{0.3cm} i\in \{0,\ldots, N\},
\end{eqnarray}
where for each $i\in\{0,\cdots,N\}$, $\lambda_i(T)$ is barycentric coordonates of $T\in\cT_h$.

In classical reference element  $\overline{T}$, the basis fonctions are given by:
\begin{eqnarray}
\left\{
\begin{array}{cccccccccccc}
&\bar{\psi}_0(\bar{x},\bar{y})& &=& 1-2\bar{y},\\
&\bar{\psi}_1(\bar{x},\bar{y})& &=& -1+2\bar{x}+2\bar{y},\\
&\bar{\psi}_2(\bar{x},\bar{y})& &=& 1-2\bar{x}.
\end{array}
\right.
\end{eqnarray}

\begin{figure}[http]
	\begin{minipage}[c]{.20\textwidth}
		\centering
	\begin{center}
		\includegraphics[height= 4cm, width=4cm]{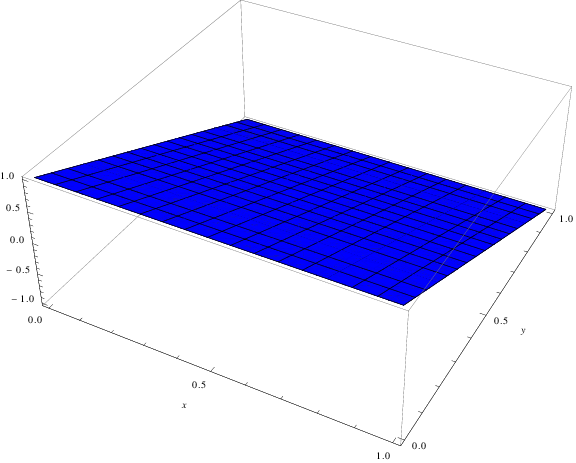}
	\end{center}
	\caption{\footnotesize{  $\bar{\psi}_0$.}}
\end{minipage}
\hspace*{2.5cm}
\begin{minipage}[c]{.20\textwidth}
	\centering
	\begin{center}
		\includegraphics[height= 4cm, width=4cm]{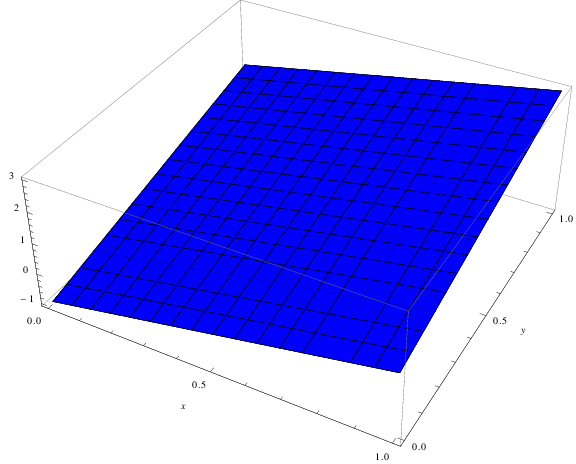}
	\end{center}
	\caption{\footnotesize{ $\bar{\psi}_1$.}}
\end{minipage}
\hspace*{2.5cm}
\begin{minipage}[c]{.20\textwidth}
	\centering
	\begin{center}
		\includegraphics[height= 4cm, width=4cm]{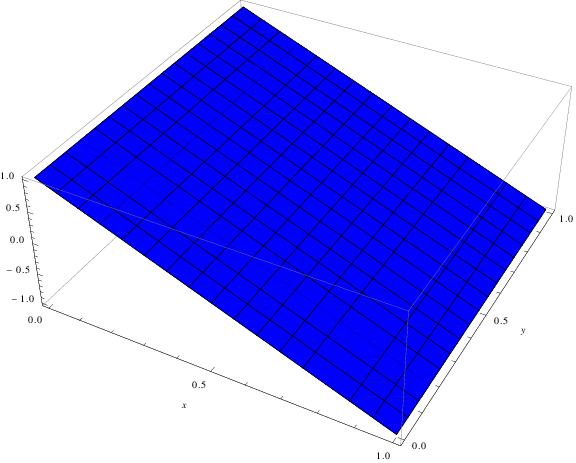}
	\end{center}
	\caption{\footnotesize{ $\bar{\psi}_2$.}}
	\end{minipage}
\end{figure}\mbox{  }\\

Based on the above notation, we introduce a variant of  the nonconforming Crouzeix-Raviart piecewise linear finite element space  (larger than  the space $\textbf{H}_h$ used in \cite{7})
\begin{eqnarray}\label{espacevitesse}
	\textbf{H}_h:=\left\{\textbf{v}_h: {\textbf{v}_h}_{|T}\in [\mathbb{P}^1(T)]^N \mbox{ } \forall T \in \mathcal{T}_h,
	([\textbf{v}_h]_E,\textbf{1})_E=0 \mbox{  } \forall E\in\cE_h(\Omega_s^+) \right.,\\
	\left. \left([\textbf{v}_h\cdot\textbf{n}_E]_E,1\right)_E=0 \hspace*{0.2cm}
	\forall E\in  
	\cE_h(\Omega_d)\cup\cE_h(\partial \Omega_d)\right\}
\end{eqnarray}
and piecewise constant function space 
\begin{eqnarray*}
	Q_h&:=&\left\{q_h\in L_0^2(\Omega): {q_h}_{|T}\in \mathbb{P}^0(T) \mbox{  } \forall T\in \mathcal{T}_h\right\},
\end{eqnarray*}
where $\mathbb{P}^m(T)$ is  the space of the restrictions to $T$ of all polynomials of degree less than or equal to $m$.
The space $Q_h$ is equipped with the norm $\parallel\cdot\parallel$ while the norm on $\textbf{H}_h$ will be specified later on.
The choice of $\textbf{H}_h$ is more natural than the one introduced in \cite{7} since the space $\textbf{H}_h$
approximates only $H(\dive, \Omega_d)$ and not  $[H^1(\Omega_d)]^N$, while our a priori error analysis is only valid in this larger space.

Let us introduce the discrete divergence operator 
$\dive_h\in \mathcal{L}(\textbf{H}_h;Q_h)\cap \mathcal{L}(\textbf{H};Q)$  by 
\begin{eqnarray}
(\dive_h \textbf{v}_h)_{|T}=\dive ({{\textbf{v}_h}_{|T}}), \forall T\in\mathcal{T}_h.
\end{eqnarray}
Then, we can introduce  two bilinear forms 
\begin{eqnarray*}
	\tilde{\textbf{a}}_h(\textbf{u},\textbf{v})&:=&2\mu\sum_{T\in\mathcal{T}_h^s}(\textbf{D}(\textbf{u}),\textbf{D}(\textbf{v}))_T+
	\sum_{j=1}^{N-1}\frac{\mu\alpha_1}{\sqrt{\kappa_j}}\langle \textbf{u}_s\cdot\tau_j,\textbf{v}_s\cdot\tau_j\rangle_{\Gamma_I}\\
	&+&\mu(\textbf{K}^{-1}\textbf{u},\textbf{v})_{\Omega_d}+\left(\dive_h\textbf{u},\dive_h\textbf{v}\right)_{\Omega_d}, \mbox{     }\forall \textbf{u},\textbf{v}\in \textbf{H}\cup\textbf{H}_h
\end{eqnarray*}
and 
\begin{eqnarray*}
	\textbf{b}_h(\textbf{v},q):=-(q,\dive_h \textbf{v})_{\Omega}, \mbox{    }\forall \textbf{v}\in \textbf{H}\cup
	\textbf{H}_h, \forall q\in Q_h.
\end{eqnarray*}
Then  the finite element discretization of (\ref{modified}) is to  find 
$(\textbf{u}_h,p_h)\in \textbf{H}_h\times Q_h$ such that 
\begin{eqnarray}\label{PD}
\left\{
\begin{array}{ccc}
\tilde{\textbf{a}}_h(\textbf{u}_h,\textbf{v}_h)+\textbf{b}_h(\textbf{v}_h,p_h)+
\textbf{J}(\textbf{u}_h,\textbf{v}_h)&=& \tilde{L}(\textbf{v}_h),
\forall \textbf{v}_h\in \textbf{H}_h,\\
\textbf{b}_h(\textbf{u}_h,q_h)&=&G(q_h), \forall q_h\in Q_h.
\end{array}
\right.
\end{eqnarray}
This is the natural discretization of the modified weak formulation (\ref{modified})  except that  the penalizing term 
$\textbf{J}(\textbf{u}_h,\textbf{v}_h)$ is added. This bilinear form $\textbf{J}(.,.)$ is defined by following the decomposition  
(\ref{dec}) of 
$\cE_h$: 
\begin{eqnarray}\label{dec}
\textbf{J}(\textbf{u},\textbf{v})=\textbf{J}_{\Omega_s^+}(\textbf{u},\textbf{v})+
\textbf{J}_{\Omega_d}(\textbf{u},\textbf{v})+\textbf{J}_{\partial \Omega_d}
(\textbf{u},\textbf{v})
\end{eqnarray}
where
\begin{eqnarray*}
	\textbf{J}_{\Omega_s^+}(\textbf{u},\textbf{v})&:=&(1+2\mu)
	\sum_{E\in \cE_h(\Omega_s^+)}h_E^{-1}\int_E[\textbf{u}]_E\cdot[\textbf{v}]_E ds,\\
	\textbf{J}_{\Omega_d}(\textbf{u},\textbf{v})&:=&\sum_{E\in \cE_h(\Omega_d)}h_E^{-1}
	\int_E[\textbf{u}]_E\cdot[\textbf{v}]_E ds, \hspace*{0.2cm}\mbox{    and    }\\
	\textbf{J}_{\partial \Omega_d}(\textbf{u},\textbf{v})&:=&\sum_{E\in\cE_h(\partial\Omega_d)}h_E^{-1}
	\int_E[\textbf{u}\cdot\textbf{n} _E]_E[\textbf{v}\cdot\textbf{n}_E]_E ds.
\end{eqnarray*}
Here, $h_E$ is the length ($N=2$) or diameter ($N=3$) of $E$.
Note that each element  of $\cE_h$ only contributes with one jump 
term in $\textbf{J}(\textbf{u},\textbf{v}).$
\begin{rmq}
	The Eq. (\ref{PD}) have the matrix representation
	\begin{eqnarray*}
	\textbf{M}_A\textbf{U}+\textbf{M}_B^T\textbf{P}+\textbf{M}_J\textbf{U}&=&\textbf{F}\\
	\textbf{M}_B\textbf{U}&=&\textbf{G}
	\end{eqnarray*}
where $\textbf{U}$ (resp. \textbf{P}) denote the  coefficients of $\textbf{u}_h$ (resp. $p_h$) expanded with respect to a basis for $\textbf{H}_h$ (rep. $Q_h$).
\end{rmq}

We are now able to define the norm on $\textbf{H}_h$ (see \cite{7}):
%where the norm on $\textbf{H}_h$ is define by 
\begin{eqnarray*}
	\parallel\textbf{v}\parallel_h&:=&
	\left(\sum_{T\in\mathcal{T}_h^s}|\textbf{v}|_{1,T}^2+
	\sum_{j=1}^{N-1}\langle\textbf{v}_s\cdot\tau_j,\textbf{v}_s\cdot\tau_j\rangle_{\Gamma_I}+
	\parallel\textbf{v}\parallel_{\Omega_d}^2+\parallel\dive_h \textbf{v}\parallel_{\Omega_d}^2+
	\textbf{J}(\textbf{v},\textbf{v})\right)^{1/2}.
\end{eqnarray*}

In the sequel, we will denote by $\alpha$, $\beta$ and $C_i$ various constants independent of $h$. For the sake of convenience, we will define the bilinear form:
$$A_h(\textbf{u},\textbf{v})=\tilde{\textbf{a}}_h(\textbf{u},\textbf{v})+\textbf{J}(\textbf{u},\textbf{v}).$$
From H$\ddot{o}$lder's inequality, we derive the boundedness of $A_h(\cdot,\cdot)$ and $\textbf{b}_h(\cdot,\cdot)$:

\begin{lem} (Continuity of forms)
	There holds:
	\begin{eqnarray}
	|\tilde{L}(\textbf{v}_h)|&\leq & C_1\parallel \textbf{v}_h\parallel_{h}, \hspace*{0.4cm}\forall \textbf{v}_h\in \textbf{H}_h\cup
	\textbf{H},\\
	|G(q_h)|&\leq& C_2\parallel q_h\parallel, \hspace*{0.4cm}\forall q_h\in Q_h,\\
	|A_h(\textbf{u}_h,\textbf{v}_h)|&\leq& C_3\parallel\textbf{u}_h\parallel_h\times \parallel\textbf{v}_h\parallel_h,
	\hspace*{0.4cm} \forall \textbf{u}_h,\textbf{v}_h\in \textbf{H}_h\cup\textbf{H},\\
	|\textbf{b}_h(\textbf{v}_h,q_h)|&\leq& C_4\parallel\textbf{v}_h\parallel_h\times \parallel q_h\parallel, 
	\hspace*{0.4cm}\forall \textbf{v}_h\in \textbf{H}_h\cup\textbf{H}, \forall q_h\in Q_h.
	\end{eqnarray}
\end{lem}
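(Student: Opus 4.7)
The proof is a routine verification that each of the four bounds follows from Cauchy--Schwarz (H\"older) applied term by term, combined with the elementary fact that all pieces appearing in the bilinear forms are dominated by one of the contributions to $\|\cdot\|_h$. My plan is to write it as four short items, handling them in the order (2), (1), (4), (3), from easiest to most delicate.

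For (2), the bound $|G(q_h)|\le \|g\|_{\Omega}\|q_h\|$ follows by splitting $G(q_h)=-(g,q_h)_{\Omega_s}-(g,q_h)_{\Omega_d}$, applying Cauchy--Schwarz on each subdomain, and using $\|q_h\|_{\Omega_s}^2+\|q_h\|_{\Omega_d}^2=\|q_h\|^2$. For (1), the same argument bounds $|(\textbf{f},\textbf{v}_h)_{\Omega_s}|\le \|\textbf{f}\|_{\Omega_s}\|\textbf{v}_h\|_{\Omega_s}$; since $\textbf{v}_h|_{\Omega_s}$ is piecewise $H^1$ and vanishes (in a Crouzeix--Raviart sense) on $\Gamma_s$, a discrete Poincar\'e/Friedrichs estimate available in the broken Crouzeix--Raviart space gives $\|\textbf{v}_h\|_{\Omega_s}\lesssim \bigl(\sum_{T\in\mathcal{T}_h^s}|\textbf{v}_h|_{1,T}^2+\textbf{J}_{\Omega_s^+}(\textbf{v}_h,\textbf{v}_h)\bigr)^{1/2}\le\|\textbf{v}_h\|_h$; the Darcy contribution is controlled directly by $\|\textbf{v}_h\|_{\Omega_d}$, and the divergence remainder (understood as $(g_d,\dive_h\textbf{v}_h)_{\Omega_d}$ after correcting the evident typo) by $\|\dive_h\textbf{v}_h\|_{\Omega_d}$. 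Together, these give $|\tilde{L}(\textbf{v}_h)|\le C_1\|\textbf{v}_h\|_h$ with $C_1$ depending on $\|\textbf{f}\|_\Omega$, $\|g_d\|_{\Omega_d}$, and the Friedrichs constant.

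For (4), I would write $\textbf{b}_h(\textbf{v}_h,q_h)=-(q_h,\dive_h\textbf{v}_h)_{\Omega_s}-(q_h,\dive_h\textbf{v}_h)_{\Omega_d}$ and apply Cauchy--Schwarz on each subdomain. On the Darcy side $\|\dive_h\textbf{v}_h\|_{\Omega_d}$ is directly one of the five contributions to $\|\textbf{v}_h\|_h^2$. On the Stokes side I use the elementwise inequality $|\dive \textbf{v}_h|_T\le \sqrt{N}\,|\textbf{v}_h|_{1,T}$, sum over $T\in\mathcal{T}_h^s$, and bound the result by $\sqrt{N}\bigl(\sum_{T\in\mathcal{T}_h^s}|\textbf{v}_h|_{1,T}^2\bigr)^{1/2}\le\sqrt{N}\,\|\textbf{v}_h\|_h$. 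The constant $C_4$ is then $\sqrt{N+1}$ times an absolute constant.

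The main step is (3). Writing $A_h=\tilde{\textbf{a}}_h+\textbf{J}$, I bound each of the five terms of $\tilde{\textbf{a}}_h$ by Cauchy--Schwarz in the appropriate space: the Stokes strain term using $|\textbf{D}(\textbf{u})|_T\le |\textbf{u}|_{1,T}$ and the discrete Cauchy--Schwarz $\sum_T|\textbf{u}|_{1,T}|\textbf{v}|_{1,T}\le\bigl(\sum_T|\textbf{u}|_{1,T}^2\bigr)^{1/2}\bigl(\sum_T|\textbf{v}|_{1,T}^2\bigr)^{1/2}$; the Beavers--Joseph boundary term by Cauchy--Schwarz on $\Gamma_I$ against the $\langle\textbf{v}_s\cdot\tau_j,\textbf{v}_s\cdot\tau_j\rangle_{\Gamma_I}$ piece of $\|\cdot\|_h$; the Darcy mass term using the upper bound $K_*^{-1}$ on $\textbf{K}^{-1}$ against $\|\textbf{v}\|_{\Omega_d}$; and the added $(\dive_h\cdot,\dive_h\cdot)_{\Omega_d}$ term against $\|\dive_h\textbf{v}\|_{\Omega_d}$. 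For $\textbf{J}$, I apply Cauchy--Schwarz edge-by-edge:
\begin{equation*}
\Bigl|\sum_{E}h_E^{-1}\int_E[\textbf{u}]_E\cdot[\textbf{v}]_E\,ds\Bigr|\le \textbf{J}(\textbf{u},\textbf{u})^{1/2}\,\textbf{J}(\textbf{v},\textbf{v})^{1/2}.
\end{equation*}
Summing the five estimates and using $(a_1+\cdots+a_5)\le \sqrt{5}(a_1^2+\cdots+a_5^2)^{1/2}$ produces $|A_h(\textbf{u}_h,\textbf{v}_h)|\le C_3\|\textbf{u}_h\|_h\|\textbf{v}_h\|_h$ with $C_3$ depending only on $\mu$, $\alpha_1$, $K_*$, $\{\kappa_j\}$, and the jump coefficient $1+2\mu$. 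The only mildly subtle point is ensuring that $\|\textbf{v}_h\|_{\Omega_s}$ does not appear alone in the bounds (since it is not part of $\|\cdot\|_h$); this is handled, as noted above, by a discrete Friedrichs inequality valid on the broken Crouzeix--Raviart space with jump penalization on $\Gamma_s$-edges.
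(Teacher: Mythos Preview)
Your proof is correct and follows exactly the route the paper indicates: the paper gives no argument beyond the sentence ``From H\"older's inequality, we derive the boundedness of $A_h(\cdot,\cdot)$ and $\textbf{b}_h(\cdot,\cdot)$,'' so your term-by-term Cauchy--Schwarz verification is precisely the intended proof, just spelled out in full. The one ingredient you add that the paper leaves implicit is the discrete Friedrichs/Poincar\'e inequality needed to control $\|\textbf{v}_h\|_{\Omega_s}$ (which is not part of $\|\cdot\|_h$) when bounding $(\textbf{f},\textbf{v}_h)_{\Omega_s}$; this is a genuine and necessary detail, and your handling of it is correct.
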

\begin{thm}(Coercivity of $A_h$)\label{coercive}
There is an $\alpha> 0$ such that:
\begin{equation}\label{coercivity}
A_h(\textbf{v}_h,\textbf{v}_h)\geq \alpha \parallel\textbf{v}_h\parallel_h^2 \forall \textbf{v}_h\in \textbf{H}_h.
\end{equation}
\end{thm}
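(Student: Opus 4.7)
The plan is to expand $A_h(\textbf{v}_h,\textbf{v}_h)$ term by term, match each term in the squared $h$-norm, and deal with the one missing ingredient via a broken Korn's inequality. Writing out the expansion,
\[
A_h(\textbf{v}_h,\textbf{v}_h) = 2\mu\!\!\sum_{T\in\mathcal{T}_h^s}\!\!\|\textbf{D}(\textbf{v}_h)\|_{0,T}^2 + \sum_{j=1}^{N-1}\frac{\mu\alpha_1}{\sqrt{\kappa_j}}\|\textbf{v}_{h,s}\!\cdot\!\tau_j\|_{0,\Gamma_I}^2 + \mu(\textbf{K}^{-1}\textbf{v}_h,\textbf{v}_h)_{\Omega_d} + \|\dive_h\textbf{v}_h\|_{\Omega_d}^2 + \textbf{J}(\textbf{v}_h,\textbf{v}_h),
\]
four of the six pieces of $\|\textbf{v}_h\|_h^2$ are already visible: the tangential interface term with constant $\mu\alpha_1/\sqrt{\kappa_j}>0$; the discrete divergence term with constant $1$; the jump penalization $\textbf{J}(\textbf{v}_h,\textbf{v}_h)$ with constant $1$; and the $L^2$-term on $\Omega_d$, controlled from below by $\mu/K^*$ using the uniform upper bound on $\textbf{K}$. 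The only missing piece is the broken $H^1$-seminorm $\sum_{T\in\mathcal{T}_h^s}|\textbf{v}_h|_{1,T}^2$.

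To recover that piece, I would invoke a broken Korn inequality adapted to the Crouzeix--Raviart-type space restricted to $\Omega_s$. Because $\textbf{v}_h|_{\Omega_s}$ is piecewise $[\mathbb{P}^1]^N$, has zero-mean full-vector jump across every $E\in\cE_h(\Omega_s^+)$, and zero mean trace on the Dirichlet portion $\Gamma_s$ (which has positive surface measure), the standard broken Korn estimate (of Brenner/Mardal--Winther type) yields a constant $C_K>0$ independent of $h$ such that
\[
\sum_{T\in\mathcal{T}_h^s}|\textbf{v}_h|_{1,T}^2 \le C_K\!\left(\sum_{T\in\mathcal{T}_h^s}\|\textbf{D}(\textbf{v}_h)\|_{0,T}^2 + \sum_{E\in\cE_h(\Omega_s^+)}h_E^{-1}\|[\textbf{v}_h]_E\|_{0,E}^2\right).
\]
Here I use crucially that the penalization constant in $\textbf{J}_{\Omega_s^+}$ is $1+2\mu$, so that the right-hand side is exactly bounded by a fixed multiple of $2\mu\sum_T\|\textbf{D}(\textbf{v}_h)\|_{0,T}^2 + \textbf{J}_{\Omega_s^+}(\textbf{v}_h,\textbf{v}_h)$.

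I would then pick a small $\theta\in(0,1)$ and split $A_h(\textbf{v}_h,\textbf{v}_h)=\theta A_h(\textbf{v}_h,\textbf{v}_h)+(1-\theta)A_h(\textbf{v}_h,\textbf{v}_h)$: use the $\theta$-fraction together with the broken Korn inequality above to produce the $\sum_T|\textbf{v}_h|_{1,T}^2$ piece of $\|\textbf{v}_h\|_h^2$, and use the remaining $(1-\theta)$-fraction to bound the other four norm pieces (losing only a factor $(1-\theta)$ in front of each positive constant). Taking $\alpha:=\min\bigl(\theta/C_K,\,(1-\theta)\mu\alpha_1/\sqrt{\kappa_j},\,(1-\theta)\mu/K^*,\,(1-\theta)\bigr)$ gives the desired bound $A_h(\textbf{v}_h,\textbf{v}_h)\ge\alpha\|\textbf{v}_h\|_h^2$ with $\alpha>0$ independent of $h$.

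The only genuinely delicate step is the broken Korn inequality, since $\Omega_s$ only touches Dirichlet data on $\Gamma_s$ (the remaining boundary $\Gamma_I$ is free from that viewpoint) and the jumps across $\Gamma_I$-edges are not controlled by the left-hand side. Fortunately the Korn estimate on $\Omega_s$ is \emph{purely interior} to $\Omega_s$, so one only needs the mean-jump conditions on $\cE_h(\Omega_s^+)$ plus the vanishing-mean trace on $\Gamma_s$; edges of $\cE_h(\Gamma_I)$ play no role. Everything else (the interface tangential term, the weighted Darcy $L^2$ term, the divergence contribution, and the jump penalty on $\Omega_d$ and $\partial\Omega_d$) matches the $h$-norm pointwise and needs no further argument.
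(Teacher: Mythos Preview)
Your argument is correct, but it follows a different route from the paper's. Both proofs hinge on recovering the broken $H^1$-seminorm $\sum_{T\in\mathcal{T}_h^s}|\textbf{v}_h|_{1,T}^2$ via a discrete Korn inequality of Brenner type, but they choose different rigid-motion-killing functionals. The paper introduces the seminorm $\phi(\textbf{v}_h)=\bigl|\sum_{T\in\mathcal{T}_h^s}\int_T\curl\textbf{v}_h\bigr|$, rewrites it (using the zero-mean jump conditions on interior edges) as a boundary integral over $\partial\Omega_s=\Gamma_s\cup\Gamma_I$, and estimates the $\Gamma_s$-part by $\textbf{J}_{\Omega_s^+}$ and the $\Gamma_I$-part by the tangential interface term already present in $\tilde{\textbf{a}}_h$; Brenner's inequality then gives $\sum_T|\textbf{v}_h|_{1,T}^2\lesssim\sum_T\|\textbf{D}(\textbf{v}_h)\|_T^2+\phi(\textbf{v}_h)^2+\textbf{J}_{\Omega_s^+}$. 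You instead invoke a DG-type broken Korn inequality in which the full jump penalty over $\cE_h(\Omega_s^+)$---which includes the weighted boundary traces $\sum_{E\subset\Gamma_s}h_E^{-1}\|\textbf{v}_h\|_E^2$---already controls rigid motions through the Dirichlet portion $\Gamma_s$, so the interface tangential term plays no role in your Korn step. Your path is shorter and avoids the curl manipulation; the paper's argument, by contrast, would continue to work even if $\Gamma_s$ were empty or carried no Dirichlet data, since it draws the rotational control from the Beavers--Joseph--Saffman term on $\Gamma_I$ rather than from the boundary $\Gamma_s$.
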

\begin{proof}
	Let $\textbf{v}_h\in\textbf{H}_h$. We have
	\begin{eqnarray*}
		A_h(\textbf{v}_h,\textbf{v}_h)&=&2\mu\displaystyle\sum_{T\in\mathcal{T}_h^s}\parallel\textbf{D}(\textbf{v}_h)\parallel_T^2+
		\mu(\textbf{K}^{-1}\textbf{v}_h,\textbf{v}_h)_{\Omega_d}+\displaystyle\sum_{j=1}^{N-1}\frac{\mu\alpha_1}{ \kappa_j} 
		\parallel \textbf{v}_h\cdot\tau_j\parallel_{\Gamma_I}^2\\
		&+& \parallel \dive_h \textbf{v}\parallel_{\Omega_d}^2+\textbf{J}_{\Omega_s^+}(\textbf{v}_h,\textbf{v}_h)+
		\textbf{J}_{\Omega_d}(\textbf{v}_h,\textbf{v}_h)+
		\textbf{J}_{\partial \Omega_d}(\textbf{v}_h,\textbf{v}_h)
	\end{eqnarray*}
We introduce the local space
\begin{eqnarray*}
H(\curl,T)&:=&
\left\{
\begin{array}{cccccccccccc}
	&\left\{\textbf{v}\in [L^2(T)]^2: \curl\textbf{v}\in L^2(T)\right\}&  &\mbox{  if  }& &N=2&,\\
	&\left\{\textbf{v}\in [L^2(T)]^3: \curl\textbf{v}\in [L^2(T)]^3\right\}&  &\mbox{  if  } & &N=3&.
\end{array}
\right.
\end{eqnarray*}
and for $\psi\in [H^1(T)]^N$, we define
\begin{eqnarray*}
	\gamma_{\tau}\psi:=
	\left\{
	\begin{array}{ccccccccc}
		&\psi\cdot\tau_{|\partial T}&   &\mbox{  if  } & & N=2, &\\
		&\psi\times \textbf{n}_{|\partial T}& &\mbox{  if }&  &N=3,& 
		(\tau\cdot\textbf{n} =  0 \mbox{ on } \partial T).
	\end{array}
	\right.
\end{eqnarray*}
with the semi-norm  
\begin{eqnarray}
\phi(\textbf{v}_h)&=&\left|\displaystyle\sum_{T\in\mathcal{T}_h^s}\int_T\curl \textbf{v}_h\right|_{\mathbb{R}^l}, 
\mbox{   } ( \mbox{ where } l=1 \mbox{ or  } l=3 ).
\end{eqnarray}
Using Young's inequality and Green formula, we have: 
\begin{eqnarray*}
	\phi(\textbf{v}_h)&=&\left|\int_{\Omega_s}\curl\textbf{v}_h\right|_{\mathbb{R}^l}\\
	&=&\left|\int_{\partial \Omega_s} \gamma_{\tau}(\textbf{v}_h)\right|_{\mathbb{R}^l} 
\end{eqnarray*}
\begin{eqnarray*}
	&=&\left|\int_{\Gamma_s} \gamma_{\tau}(\textbf{v}_h)\right|_{\mathbb{R}^l}+
	\left|\int_{\Gamma_I} \gamma_{\tau}(\textbf{v}_h)\right|_{\mathbb{R}^l}\\
	&\lesssim& \int_{\Gamma_s} \left|\gamma_{\tau}(\textbf{v}_h)\right|_{\mathbb{R}^l}+
	\int_{\Gamma_I} \left|\gamma_{\tau}(\textbf{v}_h)\right|_{\mathbb{R}^l}.
\end{eqnarray*}
$\bullet$ Estimate $\displaystyle\sum_{E\in \cE_h(\Gamma_s)}
\int_E\left| \gamma_{\tau}(\textbf{v}_h)\right|_{\mathbb{R}^l}$ ($l=1$ or $l=3$). We have by
 Cauchy-Schwarz inequality:
\begin{eqnarray*}
	\displaystyle\sum_{E\in \cE_h(\Gamma_s)}\int_E\left| \gamma_{\tau}(\textbf{v}_h)\right|_{\mathbb{R}^l}&\leqslant& 
	\displaystyle\sum_{E\in \cE_h(\Gamma_s)}\left\{\left( 
	\int_E\left| \gamma_{\tau}(\textbf{v}_h)\right|_{\mathbb{R}^l}^2\right)^{1/2}\times |h_E|^{1/2}\right\}\\
	&\leqslant& 
	\displaystyle\sum_{E\in \cE_h(\Gamma_s)} \left\{h_E^{-1/2}\times \left( 
	\int_E\left| \gamma_{\tau}(\textbf{v}_h)\right|_{\mathbb{R}^l}^2\right)^{1/2}\times h_E\right\}\\
	&\leqslant& 
	\left(\displaystyle\sum_{E\in \cE_h(\Gamma_s)} h_E^{-1}\int_E|[\textbf{v}_h]_E^2|_{\mathbb{R}^N}\right)^{1/2}\times 
	\left( \displaystyle\sum_{E\in \cE_h(\Gamma_s)} h_E^2\right)^{1/2}
\end{eqnarray*}
Also, we have:
\begin{eqnarray}
\left( \displaystyle\sum_{E\in \cE_h(\Gamma_s)} h_E^2\right)^{1/2} &\lesssim & 1,
\end{eqnarray}
Then,
\begin{eqnarray}
\displaystyle\sum_{E\in \cE_h(\Gamma_s)}\int_E\left| \gamma_{\tau}(\textbf{v}_h)\right|_{\mathbb{R}^l}&\lesssim& 
\left(\displaystyle\sum_{E\in \cE_h(\Gamma_s)} h_E^{-1}\int_E|[\textbf{v}_h]_E|_{\mathbb{R}^N}^2\right)^{1/2}.
\end{eqnarray}
Hence we deduce
\begin{eqnarray}
\displaystyle\sum_{E\in \cE_h(\Gamma_s)}\int_E\left| \gamma_{\tau}(\textbf{v}_h)\right|_{\mathbb{R}^l}
&\lesssim& \left(\textbf{J}_{\Omega_s^+} (\textbf{v}_h,\textbf{v}_h)\right)^{1/2}.
\end{eqnarray}
$\bullet$ Now we estime the term
$\displaystyle\sum_{E\in \cE_h(\Gamma_I)}\int_E\left| \gamma_{\tau}(\textbf{v}_h)\right|_{\mathbb{R}^l}$.
By Cauchy-Schwarz, we obtain:
\begin{eqnarray*}
	\displaystyle\sum_{E\in \cE_h(\Gamma_I)}\int_E\left| \gamma_{\tau}(\textbf{v}_h)\right|_{\mathbb{R}^l} &\leqslant&
	\left( \int_{\Gamma_I}|\gamma_{\tau}(\textbf{v}_h)|_{\mathbb{R}^l}^2\right)^{1/2}\times |\Gamma_I|^{1/2}
\end{eqnarray*}
\begin{eqnarray*}
	&\lesssim& \tilde{\textbf{a}}_h(\textbf{v}_h,\textbf{v}_h)^{1/2}.
\end{eqnarray*}
Thus we deduce the estimation:
\begin{eqnarray}
(\phi(\textbf{v}_h))^2 &\lesssim& \textbf{J}_{\Omega_s^+} (\textbf{v}_h,\textbf{v}_h)+
\tilde{\textbf{a}}_h(\textbf{v}_h,\textbf{v}_h).
\end{eqnarray}
Then,
\begin{eqnarray*}
	\textbf{J}_{\Omega_s^+} (\textbf{v}_h,\textbf{v}_h)+
	\textbf{a}_h(\textbf{v}_h,\textbf{v}_h)+\displaystyle\sum_{T\in\mathcal{T}_h^s}\parallel \textbf{D}(\textbf{v}_h)\parallel_T^2
	&\gtrsim& \displaystyle\sum_{T\in\mathcal{T}_h^s}\parallel \textbf{D}(\textbf{v}_h)\parallel_T^2+
	(\phi(\textbf{v}_h))^2+\\
	&+&
	\textbf{J}_{\Omega_s^+} (\textbf{v}_h,\textbf{v}_h).
\end{eqnarray*}
We apply Korn's discrete inequality \cite{B:03} and we get:
\begin{eqnarray}
\textbf{J}_{\Omega_s^+} (\textbf{v}_h,\textbf{v}_h)+
\textbf{a}_h(\textbf{v}_h,\textbf{v}_h)+\displaystyle\sum_{T\in\mathcal{T}_h^s}\parallel \textbf{D}(\textbf{v}_h)\parallel_T^2
&\gtrsim&
\displaystyle\sum_{T\in\mathcal{T}_h^s}\parallel \nabla(\textbf{v}_h)\parallel_{T}^2.
\end{eqnarray}
Thus
\begin{eqnarray*}
	\textbf{J}(\textbf{v}_h,\textbf{v}_h)+
	\tilde{\textbf{a}}_h(\textbf{v}_h,\textbf{v}_h)+\displaystyle\sum_{T\in\mathcal{T}_h^s}\parallel \textbf{D}(\textbf{v}_h)\parallel_T^2
	&\gtrsim&
	\displaystyle\sum_{T\in\mathcal{T}_h^s}\parallel \nabla(\textbf{v}_h)\parallel_{T}^2 +
	\textbf{J}(\textbf{v}_h,\textbf{v}_h),
\end{eqnarray*}
Hence,
\begin{eqnarray}\label{Coercivite1}
A_h(\textbf{v}_h,\textbf{v}_h)
&\gtrsim&
\displaystyle\sum_{T\in\mathcal{T}_h^s}\parallel \nabla(\textbf{v}_h)\parallel_{T}^2 +
\textbf{J}(\textbf{v}_h,\textbf{v}_h).
\end{eqnarray}
We have,
\begin{eqnarray}\label{coercivite2}
A_h(\textbf{v}_h,\textbf{v}_h)&\geqslant& \sum_{j=1}^{N-1}\parallel \textbf{v}_h\cdot\tau_j\parallel_{\Gamma_I}^2,\\
\label{coercivite3}
A_h(\textbf{v}_h,\textbf{v}_h)&\geqslant& \parallel \textbf{v}_h\parallel_{\Omega_d}^2\\
\label{Coercivite4}
A_h(\textbf{v}_h,\textbf{v}_h)&\geqslant& \parallel \dive_h\textbf{v}_h\parallel_{\Omega_d}^2
\end{eqnarray}
The estimates (\ref{Coercivite1}), (\ref{coercivite2}), (\ref{coercivite3}) and (\ref{Coercivite4}),
lead to (\ref{coercivity}). The proof is complete.
\end{proof}

In order to verify the discrete inf-sup condition, we define the space:
\begin{eqnarray}
\textbf{W}&:=&\left\{ \textbf{v}\in \textbf{H}: \mbox{  } \textbf{v}_{|\Omega_d}\in [H^1(\Omega_d)]^N\right\}.
\end{eqnarray}
We define also the Crouzeix-Raviart interpolation operator $\textbf{r}_h: \textbf{W}\rightarrow \textbf{H}_h$ by:

\begin{eqnarray}\label{Crouzei1}
\int_E(\textbf{r}_h\textbf{v})_s ds&=& \int_E\textbf{v}_s ds, \forall E\in \cE_h(\overline{\Omega}_s), \forall \textbf{v}\in \textbf{W},\\
\label{Crouzei2}
\int_E(\textbf{r}_h\textbf{v})_d ds&=& \int_E\textbf{v}_d ds, \forall E\in \cE_h(\overline{\Omega}_d), \forall \textbf{v}\in \textbf{W}.
\end{eqnarray}
\begin{lem}
The operator $\textbf{r}_h$ is bounded: there is a constant $C_5> 0$ depending on $\sigma$, $\mu$ and $N$ such that
\begin{eqnarray}
\parallel \textbf{r}_h\textbf{v}\parallel_h &\lesssim& \left(\parallel \textbf{v}\parallel_{1,s}^2+
\parallel \textbf{v}\parallel_{1,d}^2\right)^{1/2}, \forall \textbf{v}\in \textbf{W}.
\end{eqnarray}
\end{lem}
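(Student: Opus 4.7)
The plan is to control each contribution to $\|\textbf{r}_h\textbf{v}\|_h^2$ separately, using the classical local stability and approximation estimates for the Crouzeix-Raviart interpolant together with discrete trace inequalities. The key property driving everything is that, by construction \eqref{Crouzei1}-\eqref{Crouzei2}, for every edge/face $E$ one has $\int_E [\textbf{r}_h\textbf{v}]_E=0$ (for $E\in\cE_h(\Omega_s^+)$) or $\int_E[\textbf{r}_h\textbf{v}\cdot\textbf{n}_E]_E=0$ (for $E\in\cE_h(\Omega_d)\cup\cE_h(\partial\Omega_d)$), combined with the fact that $\textbf{v}\in\textbf{W}$ is $H^1$ on each subdomain, so its true jumps across interior edges of $\mathcal{T}_h^s$ and $\mathcal{T}_h^d$ vanish in the trace sense.

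First I would record the standard local estimates on each $T\in\mathcal{T}_h$: shape regularity together with a Bramble-Hilbert argument give $|\textbf{r}_h\textbf{v}|_{1,T}\lesssim |\textbf{v}|_{1,T}$ and $\|\textbf{v}-\textbf{r}_h\textbf{v}\|_{0,T}\lesssim h_T|\textbf{v}|_{1,T}$ for any $\textbf{v}\in [H^1(T)]^N$. Summing the first estimate over $T\in\mathcal{T}_h^s$ controls $\sum_{T\in\mathcal{T}_h^s}|\textbf{r}_h\textbf{v}|_{1,T}^2$ by $\|\textbf{v}\|_{1,s}^2$. On the Darcy side, $\|\textbf{r}_h\textbf{v}\|_{\Omega_d}\lesssim \|\textbf{v}\|_{\Omega_d}+h|\textbf{v}|_{1,d}\lesssim \|\textbf{v}\|_{1,d}$ via stability plus the approximation estimate, and $\|\dive_h\textbf{r}_h\textbf{v}\|_{\Omega_d}^2\le N\sum_{T\in\mathcal{T}_h^d}|\textbf{r}_h\textbf{v}|_{1,T}^2\lesssim\|\textbf{v}\|_{1,d}^2$.

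Next, for the penalty term $\textbf{J}(\textbf{r}_h\textbf{v},\textbf{r}_h\textbf{v})$: since $\textbf{v}$ is $H^1$ on $\Omega_s$ and on $\Omega_d$, one has $[\textbf{v}]_E=0$ across interior edges of each subdomain and $\textbf{v}=0$ on $\Gamma_s$, so $[\textbf{r}_h\textbf{v}]_E=[\textbf{r}_h\textbf{v}-\textbf{v}]_E$ (and similarly for $[\textbf{r}_h\textbf{v}\cdot\textbf{n}_E]_E$ on edges of $\partial\Omega_d$, using that normal traces of $\textbf{v}\in\textbf{H}$ are continuous across $\Gamma_I$ and vanish on $\Gamma_d$). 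Then for an interior edge $E$ shared by $T_1,T_2$, the discrete trace inequality followed by the local approximation estimate yields
\begin{equation*}
h_E^{-1}\|[\textbf{r}_h\textbf{v}-\textbf{v}]_E\|_{0,E}^2 \;\lesssim\; h_E^{-1}\!\!\sum_{i=1,2}\!\Bigl(h_{T_i}^{-1}\|\textbf{r}_h\textbf{v}-\textbf{v}\|_{0,T_i}^2+h_{T_i}|\textbf{r}_h\textbf{v}-\textbf{v}|_{1,T_i}^2\Bigr) \;\lesssim\; \sum_{i=1,2}|\textbf{v}|_{1,T_i}^2,
\end{equation*}
using $h_E\sim h_{T_i}$ from shape regularity. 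Summing over $E\in\cE_h(\Omega_s^+)\cup\cE_h(\Omega_d)\cup\cE_h(\partial\Omega_d)$ and using the finite overlap of the patches $T_1\cup T_2$ controls the three pieces of $\textbf{J}$ by $\|\textbf{v}\|_{1,s}^2+\|\textbf{v}\|_{1,d}^2$.

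Finally, for the interface term $\sum_{j=1}^{N-1}\langle(\textbf{r}_h\textbf{v})_s\cdot\tau_j,(\textbf{r}_h\textbf{v})_s\cdot\tau_j\rangle_{\Gamma_I}$, I would write $\textbf{r}_h\textbf{v}=\textbf{v}+(\textbf{r}_h\textbf{v}-\textbf{v})$, bound $\|\textbf{v}\cdot\tau_j\|_{\Gamma_I}$ by the continuous trace theorem $\|\textbf{v}\|_{\Gamma_I}\lesssim\|\textbf{v}\|_{1,s}$, and bound the error part by summing the discrete trace inequality over $E\in\cE_h(\Gamma_I)$ exactly as above, yielding a factor $h^{1/2}|\textbf{v}|_{1,s}$ that is absorbed. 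Collecting all five contributions gives the stated bound. The main technical obstacle is the jump estimate: one must keep track of which edges lie in which region, and in particular, on $\cE_h(\partial\Omega_d)$ only the normal component enters $\textbf{J}$, so one has to invoke the correct continuity property of $\textbf{v}$ (continuity of the normal trace across $\Gamma_I$ and the homogeneous boundary condition $\textbf{v}\cdot\textbf{n}_d=0$ on $\Gamma_d$) to write $[\textbf{r}_h\textbf{v}\cdot\textbf{n}_E]_E=[(\textbf{r}_h\textbf{v}-\textbf{v})\cdot\textbf{n}_E]_E$ before applying the trace/approximation chain.
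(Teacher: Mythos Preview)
Your argument is correct and follows the standard route for bounding the Crouzeix--Raviart interpolant in a broken norm: local $H^1$-stability for the gradient and divergence terms, $L^2$-stability plus approximation for the Darcy $L^2$ term, the multiplicative trace inequality combined with the local approximation estimate for the jump terms, and the continuous trace theorem for the tangential interface term. Note that the paper itself does not give a proof of this lemma; it simply refers to \cite{7} (Rui and Zhang), where the same interpolant and essentially the same estimates are used, so your proposal is in line with what the cited reference does.
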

\begin{proof}
	The proof is similar to \cite{7}.
\end{proof}
Then, we have the following result
\begin{thm}(Discrete Inf-Sup condition)\label{Inf-Sup}
	 There exists a positive constant $\beta$ depending on $\sigma$, $\mu$ and $N$ such that
	\begin{eqnarray}\label{ISI}
	\displaystyle\inf_{q_h\in Q_h}\sup_{\textbf{v}_h\in \textbf{H}_h}
	\frac{\textbf{b}_h(\textbf{v}_h,q_h)}{\parallel \textbf{v}_h\parallel_h\parallel q_h\parallel} &\geq & \beta.
	\end{eqnarray}
	
\end{thm}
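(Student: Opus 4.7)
The plan is to use a Fortin-style argument, building on the bounded interpolation operator $\textbf{r}_h$ introduced just above the statement.

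First I would establish the continuous inf-sup condition on the larger subspace $\textbf{W}\times Q$ with a norm stronger than $\|\cdot\|_\textbf{H}$: given $q_h\in Q_h\subset Q=L_0^2(\Omega)$, invoke the standard lifting result for the divergence on bounded Lipschitz domains to produce $\textbf{v}\in[H_0^1(\Omega)]^N$ with $\dive\textbf{v}=-q_h$ in $\Omega$ and $\|\textbf{v}\|_{1,\Omega}\lesssim\|q_h\|$. Since $\textbf{v}$ vanishes on all of $\partial\Omega=\Gamma_s\cup\Gamma_d$, in particular $\textbf{v}\cdot\textbf{n}_d=0$ on $\Gamma_d$, so $\textbf{v}\in\textbf{H}$, and its componentwise $H^1$-regularity on $\Omega_d$ places it in $\textbf{W}$. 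This gives $\textbf{b}(\textbf{v},q_h)=\|q_h\|^2$ together with $(\|\textbf{v}\|_{1,s}^2+\|\textbf{v}\|_{1,d}^2)^{1/2}\lesssim\|q_h\|$.

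Next I would verify the Fortin commutativity relation $\textbf{b}_h(\textbf{r}_h\textbf{v},q_h)=\textbf{b}(\textbf{v},q_h)$ for every $\textbf{v}\in\textbf{W}$ and $q_h\in Q_h$. On each $T\in\mathcal{T}_h$, $q_h|_T$ is a constant, so by the divergence theorem
\[
\int_T q_h\,\dive(\textbf{r}_h\textbf{v}) = q_h|_T\int_{\partial T}(\textbf{r}_h\textbf{v})\cdot\textbf{n}\,ds = q_h|_T\sum_{E\in\mathcal{E}(T)}\int_E(\textbf{r}_h\textbf{v})\cdot\textbf{n}\,ds.
\]
The defining relations (\ref{Crouzei1})--(\ref{Crouzei2}) of $\textbf{r}_h$ preserve the mean value of each component on every edge/face $E\in\mathcal{E}_h$, hence in particular $\int_E(\textbf{r}_h\textbf{v})\cdot\textbf{n}\,ds=\int_E\textbf{v}\cdot\textbf{n}\,ds$, and the right-hand side above collapses to $\int_T q_h\,\dive\textbf{v}$. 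Summing over $T$ yields the claim.

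Combining the three ingredients finishes the argument: for any $q_h\in Q_h$, choose the $\textbf{v}\in\textbf{W}$ from step one, set $\textbf{v}_h=\textbf{r}_h\textbf{v}\in\textbf{H}_h$, and use the boundedness of $\textbf{r}_h$ from the preceding lemma to estimate
\[
\sup_{\textbf{w}_h\in\textbf{H}_h}\frac{\textbf{b}_h(\textbf{w}_h,q_h)}{\|\textbf{w}_h\|_h}\geq\frac{\textbf{b}_h(\textbf{r}_h\textbf{v},q_h)}{\|\textbf{r}_h\textbf{v}\|_h}=\frac{\textbf{b}(\textbf{v},q_h)}{\|\textbf{r}_h\textbf{v}\|_h}\geq\frac{\|q_h\|^2}{C_5(\|\textbf{v}\|_{1,s}^2+\|\textbf{v}\|_{1,d}^2)^{1/2}}\gtrsim\|q_h\|,
\]
giving (\ref{ISI}) with $\beta$ depending only on $\sigma$, $\mu$, $N$ and the continuous inf-sup constant. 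The main subtlety is really step two: one must be careful that the face-mean-preserving property of $\textbf{r}_h$ uses only the degrees of freedom that are actually available on each side of $\Gamma_I$ (tangential moments on the Stokes side, normal moments on the Darcy side), and that the unit normal $\textbf{n}_E$ on the interior face $E$ of $T$ entering the boundary integral is indeed the one picked up by the jump-preservation conditions in the definition of $\textbf{H}_h$; once this bookkeeping is done the commutativity is immediate.
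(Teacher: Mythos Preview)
Your proposal is correct and follows essentially the same Fortin argument as the paper: lift $q_h$ to $\textbf{v}\in[H_0^1(\Omega)]^N\subset\textbf{W}$ with $\dive\textbf{v}=-q_h$, set $\textbf{v}_h=\textbf{r}_h\textbf{v}$, use the face-mean-preserving relations (\ref{Crouzei1})--(\ref{Crouzei2}) together with the divergence theorem on each element to get $\textbf{b}_h(\textbf{r}_h\textbf{v},q_h)=\textbf{b}(\textbf{v},q_h)=\|q_h\|^2$, and conclude via the boundedness of $\textbf{r}_h$. Your closing caveat is slightly overcautious: as defined in the paper, $\textbf{r}_h$ preserves the full vector mean on every face of $\cE_h(\overline{\Omega}_s)\cup\cE_h(\overline{\Omega}_d)$ (this is legitimate because $\textbf{v}\in\textbf{W}$ is $H^1$ on $\Omega_d$), so no splitting into tangential versus normal moments is needed for the commutativity step.
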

\begin{proof}
We use Fortin argument i.e. for each $q_h\in Q_h$, we find $\textbf{v}_h\in\textbf{H}_h$ such that:
\begin{eqnarray*}
	\textbf{b}_h(\textbf{v}_h,q_h)=\parallel q_h\parallel_{\Omega}^2 \mbox{  and   } \parallel\textbf{v}_h\parallel_h
	\hspace*{0.3cm}\lesssim\hspace*{0.3cm} 
	\parallel q_h\parallel_{\Omega}.
\end{eqnarray*}
Let $q_h\in Q_h\subset Q$. Then from 
\cite[Corollary 2.4, Page 24]{girault:86}, there exist vectoriel function
$\textbf{v}\in [H_0^1(\Omega)]^N$ satisfying
\begin{eqnarray}\label{IS1}
\left\{
\begin{array}{cccccccccccc}
\mbox{div}\mbox{ } \textbf{v}&=&-q_h, \mbox{  in } \Omega\\
\parallel \textbf{v}\parallel_{1,\Omega} &\lesssim& \parallel q_h\parallel_{\Omega}.
\end{array}
\right.
\end{eqnarray}
$[H_0^1(\Omega)]^N\subset W$, hence $\textbf{v}\in W$. We take $\textbf{v}_h=r_h\textbf{v}\in \textbf{H}_h$ and we have:
\begin{eqnarray*}
	\textbf{b}_h(\textbf{v}-r_h\textbf{v},q_h)&=&- \displaystyle\sum_{T\in\mathcal{T}_h}\int_{T} q_h\mbox{div}\mbox{ } (\textbf{v}-
	r_h\textbf{v}),\\
	&=&-\displaystyle\sum_{T\in\mathcal{T}_h}\int_{\partial T} q_h\textbf{n}_T\cdot (\textbf{v}-r_h\textbf{v})\\
	&=&-\displaystyle\sum_{E\in\cE_h(\Omega_s^+)}\int_E q_h\textbf{n}_E\cdot(\textbf{v}-r_h\textbf{v})\\
	&-&\displaystyle\sum_{E\in\cE_h(\overline{\Omega}_d)}\int_E q_h\textbf{n}_E\cdot(\textbf{v}-r_h\textbf{v})\\
	&=&0 \mbox{  }  (  \mbox{  from the identities  }  (\ref{Crouzei1}) \mbox{ and  } (\ref{Crouzei2})  ).
\end{eqnarray*}
Thus, we obtain
 \begin{eqnarray*}
	\textbf{b}_h(\textbf{v},q_h)&=&\textbf{b}_h(r_h\textbf{v},q_h).
\end{eqnarray*}
Using the system  $(\ref{IS1})$, we have:
\begin{eqnarray}\label{ISI2}
\textbf{b}_h(r_h\textbf{v},q_h)=-\int_{\Omega} q_h\mbox{div} (\textbf{v})=\parallel q_h\parallel^2 &\gtrsim&
\parallel \textbf{v}\parallel_{1,\Omega}\times \parallel q_h\parallel.
\end{eqnarray}
Also,
\begin{eqnarray}\label{ISI3}
\parallel \textbf{v}_h\parallel_h=\parallel r_h\textbf{v}\parallel_h\hspace*{0.2cm}\lesssim 
\hspace*{0.2cm}\parallel \textbf{v}\parallel_{1,\Omega}.
\end{eqnarray}
From (\ref{ISI2}) et (\ref{ISI3}), we deduce: 
\begin{eqnarray}
\textbf{b}_h(r_h\textbf{v},q_h)&\gtrsim& \parallel \textbf{v}_h\parallel_h\times \parallel q_h\parallel,
\mbox{  } \forall  q_h\in Q_h.
\end{eqnarray}
The Inf-Sup condition holds and the proof is complete.
\end{proof}

From Theorem \ref{coercive} and Theorem \ref{Inf-Sup} we have the following result:
\begin{thm}
There exists a unique solution $(\textbf{u}_h,p_h)\in \textbf{H}_h\times Q_h$ to the problem (\ref{modified}).
\end{thm}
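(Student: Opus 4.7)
The plan is to apply the discrete version of the Babu\v{s}ka--Brezzi theory for saddle-point problems, for which all the required ingredients are already at hand. The discrete problem (\ref{PD}) is a square finite-dimensional linear system in the unknowns $(\textbf{u}_h,p_h)\in \textbf{H}_h\times Q_h$, so existence of a solution is equivalent to uniqueness, and it suffices to prove one of the two.

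First, I would recast (\ref{PD}) in the standard saddle-point form with bilinear forms $A_h(\cdot,\cdot)+\textbf{J}(\cdot,\cdot)$ on $\textbf{H}_h\times\textbf{H}_h$ and $\textbf{b}_h(\cdot,\cdot)$ on $\textbf{H}_h\times Q_h$, together with the right-hand sides $\tilde{L}$ and $G$. The continuity lemma above already provides boundedness of all four forms in the norms $\|\cdot\|_h$ and $\|\cdot\|$, so the only assumptions to check for the discrete Brezzi theorem are (i) coercivity of $A_h$ on the discrete kernel
\[
\textbf{Z}_h := \bigl\{\textbf{v}_h\in \textbf{H}_h : \textbf{b}_h(\textbf{v}_h,q_h)=0 \ \ \forall q_h\in Q_h\bigr\},
\]
and (ii) the discrete inf--sup condition for $\textbf{b}_h$.

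Both conditions are already established: Theorem \ref{coercive} gives $A_h(\textbf{v}_h,\textbf{v}_h)\geq \alpha\|\textbf{v}_h\|_h^{2}$ for every $\textbf{v}_h\in \textbf{H}_h$, which in particular holds on the subspace $\textbf{Z}_h$, and Theorem \ref{Inf-Sup} provides the inf--sup constant $\beta>0$ for $\textbf{b}_h$. Invoking the discrete analog of \cite[Theorem and Corollary 4.1 in Chapter I]{girault:86} then yields the existence and uniqueness of the discrete pair $(\textbf{u}_h,p_h)\in \textbf{H}_h\times Q_h$, together with an a priori estimate of the form
\[
\|\textbf{u}_h\|_h + \|p_h\| \ \lesssim \ \frac{1}{\alpha\beta}\bigl(\|\tilde{L}\|_{\textbf{H}_h'}+\|G\|_{Q_h'}\bigr),
\]
which, by the continuity lemma, is controlled by the data norms $\|\textbf{f}\|_{0,\Omega}$ and $\|g\|_{0,\Omega}$.

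Since all the genuinely hard work (coercivity via the Korn--type argument and the Fortin-operator construction of the discrete inf--sup condition) was done in Theorems \ref{coercive} and \ref{Inf-Sup}, no real obstacle remains here; the only care needed is to verify that the kernel $\textbf{Z}_h$ is indeed nonempty and that the coercivity of Theorem \ref{coercive} is expressed in the same norm $\|\cdot\|_h$ that appears in the inf--sup denominator, which is the case by construction. This completes the proof outline.
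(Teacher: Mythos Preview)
Your proposal is correct and follows exactly the same route as the paper, which simply invokes Theorems \ref{coercive} and \ref{Inf-Sup} together with the classical discrete Babu\v{s}ka--Brezzi theory without further detail. One small notational slip: the form $A_h$ already incorporates the jump term $\textbf{J}$ by definition, so you should write $A_h$ rather than $A_h+\textbf{J}$ when recasting the saddle-point structure.
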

\subsection{A convergence analysis}

We now present an a priori analysis of the approximation error: The use of nonconforming finite element leads to $\textbf{H}_h\nsubseteq \textbf{H}$, so the approximation error contains some extra consistency error terms. In fact, the abstract error estimates give the following result:
\begin{lem}\label{Strang}
Let $(\textbf{u},p)\in \textbf{H}\times Q$ be the solution of problem (\ref{modified}) and 
$(\textbf{u}_h,p_h)\in \textbf{H}_h\times Q_h$ be the solution of the discrete problem (\ref{PD}).
Then we have
\begin{eqnarray}
\parallel\textbf{u}-\textbf{u}_h\parallel_h+\parallel p-p_h\parallel\lesssim 
\displaystyle\inf_{\textbf{v}_h\in\textbf{H}_h}\parallel\textbf{u}-\textbf{v}_h\parallel_h+
\displaystyle\inf_{q_h\in Q_h}\parallel p-q_h\parallel+E_{1h}+E_{2h}.
\end{eqnarray}
where $E_{1h}$ and $E_{2h}$ are the consistency error terms define by:
\begin{eqnarray}
E_{1h}&=& \displaystyle\sup_{\textbf{v}_h\in\textbf{H}_h}
\frac{|A_h(\textbf{u},\textbf{v}_h)+\textbf{b}_h(\textbf{v}_h,p)-(\textbf{f},\textbf{v}_h)_{\Omega}-
	(\dive\textbf{u},\dive_h\textbf{v}_h)_{\Omega_d}|}
{\parallel\textbf{v}_h\parallel_h},\\
E_{2h}&=& \sup_{q_h\in Q_h}\frac{|\textbf{b}_h(\textbf{u},q_h)+(g,q_h)_{\Omega}|}{\parallel q_h\parallel}.
\end{eqnarray}
\end{lem}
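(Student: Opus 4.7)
The plan is to adapt the classical second Strang lemma to the mixed saddle-point setting. The discrete coercivity of $A_h$ on $\textbf{H}_h$ (Theorem \ref{coercive}) and the discrete inf-sup condition for $\textbf{b}_h$ (Theorem \ref{Inf-Sup}), together with the continuity bounds from the preceding lemma, yield a Brezzi-type discrete stability; the task is therefore to quantify the residual obtained when the exact pair $(\textbf{u},p)$ is plugged into the broken forms $A_h(\cdot,\cdot)$ and $\textbf{b}_h(\cdot,\cdot)$ on $\textbf{H}\cup\textbf{H}_h$ and compared to the discrete equations.

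First I would apply the triangle inequality: for arbitrary $\textbf{v}_h\in\textbf{H}_h$ and $q_h\in Q_h$, set $\textbf{w}_h:=\textbf{u}_h-\textbf{v}_h\in\textbf{H}_h$ and $r_h:=p_h-q_h\in Q_h$. Then
\[
\parallel\textbf{u}-\textbf{u}_h\parallel_h+\parallel p-p_h\parallel\leq \parallel\textbf{u}-\textbf{v}_h\parallel_h+\parallel p-q_h\parallel+\parallel\textbf{w}_h\parallel_h+\parallel r_h\parallel,
\]
and the infima over $\textbf{v}_h,q_h$ will be taken at the very end. It thus suffices to control the discrete pair $(\textbf{w}_h,r_h)$.

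Next I derive perturbed error equations. Using the discrete system (\ref{PD}) and inserting $\pm A_h(\textbf{u},\cdot)\pm\textbf{b}_h(\cdot,p)$ and $\pm\textbf{b}_h(\textbf{u},\cdot)$, one obtains for every $\textbf{z}_h\in\textbf{H}_h$ and $s_h\in Q_h$
\[
A_h(\textbf{w}_h,\textbf{z}_h)+\textbf{b}_h(\textbf{z}_h,r_h)=A_h(\textbf{u}-\textbf{v}_h,\textbf{z}_h)+\textbf{b}_h(\textbf{z}_h,p-q_h)-\mathcal{R}_1(\textbf{z}_h),
\]
\[
\textbf{b}_h(\textbf{w}_h,s_h)=\textbf{b}_h(\textbf{u}-\textbf{v}_h,s_h)-\mathcal{R}_2(s_h),
\]
where the residuals $\mathcal{R}_1(\textbf{z}_h):=A_h(\textbf{u},\textbf{z}_h)+\textbf{b}_h(\textbf{z}_h,p)-\tilde{L}(\textbf{z}_h)-(\dive\textbf{u},\dive_h\textbf{z}_h)_{\Omega_d}$ and $\mathcal{R}_2(s_h):=\textbf{b}_h(\textbf{u},s_h)+(g,s_h)_\Omega$ are precisely the expressions whose normalized suprema define $E_{1h}$ and $E_{2h}$, so $|\mathcal{R}_1(\textbf{z}_h)|\leq E_{1h}\parallel\textbf{z}_h\parallel_h$ and $|\mathcal{R}_2(s_h)|\leq E_{2h}\parallel s_h\parallel$. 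Inserting the continuity of $A_h$ and $\textbf{b}_h$ on the other terms then gives
\[
|A_h(\textbf{w}_h,\textbf{z}_h)+\textbf{b}_h(\textbf{z}_h,r_h)|\lesssim (\parallel\textbf{u}-\textbf{v}_h\parallel_h+\parallel p-q_h\parallel+E_{1h})\parallel\textbf{z}_h\parallel_h,
\]
\[
|\textbf{b}_h(\textbf{w}_h,s_h)|\lesssim (\parallel\textbf{u}-\textbf{v}_h\parallel_h+E_{2h})\parallel s_h\parallel.
\]
A standard Brezzi stability argument, built from Theorems \ref{coercive} and \ref{Inf-Sup}, converts these two bounds into $\parallel\textbf{w}_h\parallel_h+\parallel r_h\parallel\lesssim \parallel\textbf{u}-\textbf{v}_h\parallel_h+\parallel p-q_h\parallel+E_{1h}+E_{2h}$; combining with the triangle inequality and passing to the infima concludes the proof.

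The main obstacle is the careful bookkeeping of $\mathcal{R}_1$: because $\tilde{L}$ includes the stabilization term $(\dive\textbf{u},\dive_h\textbf{v}_h)_{\Omega_d}$ coming from the Darcy modification, and because $A_h$ contains jump and broken-divergence contributions that do not automatically vanish on $\textbf{u}\in\textbf{H}$ when paired with a nonconforming $\textbf{z}_h\in\textbf{H}_h$, one must verify that the algebraic residual matches exactly the numerator used to define $E_{1h}$. Once that identification is in place, the rest of the argument is the usual mixed-method Strang lemma.
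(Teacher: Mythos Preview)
Your approach is correct and is the standard second Strang/Brezzi argument for nonconforming mixed methods. The paper itself does not supply a proof of this lemma; it merely introduces it as a consequence of ``abstract error estimates'' and immediately observes that $E_{2h}=0$. Your outline is therefore more detailed than what the paper provides, and it matches the argument one would expect.

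One bookkeeping slip worth fixing: in your definition of $\mathcal{R}_1$ you subtract both $\tilde{L}(\textbf{z}_h)$ and $(\dive\textbf{u},\dive_h\textbf{z}_h)_{\Omega_d}$. Since by the paper's definition $\tilde{L}(\textbf{z}_h)=(\textbf{f},\textbf{z}_h)_\Omega+(\dive\textbf{u},\dive_h\textbf{z}_h)_{\Omega_d}$ already contains the divergence term, the residual that makes your error equation hold is simply
\[
\mathcal{R}_1(\textbf{z}_h)=A_h(\textbf{u},\textbf{z}_h)+\textbf{b}_h(\textbf{z}_h,p)-\tilde{L}(\textbf{z}_h),
\]
and this coincides exactly with the numerator of $E_{1h}$. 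With that correction, your perturbed error equations and the subsequent Brezzi stability step (coercivity of $A_h$ on $\textbf{H}_h$ plus the discrete inf--sup for $\textbf{b}_h$) go through as written.
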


Note that $\textbf{b}_h(\textbf{u},q_h)=\textbf{b}(\textbf{u},q_h)$, thus $E_{2h}=0$.

For estiming the approximation error, we assume that the solution $(\textbf{u},p)$ of problem (\ref{modified}) satisfies the smoothness assumptions:
\begin{Ass}\label{assumption}\mbox{   }
	\begin{enumerate}
	\item $\textbf{u}\in \textbf{H}$, $\textbf{u}_s\in [H^2(\Omega_s)]^N$, $\textbf{u}_d\in [H^2(\Omega_d)]^N$;
	\item $p\in Q$, $p_s\in H^1(\Omega_s)$, $p_d\in H^1(\Omega_d)$.
	\end{enumerate}
\end{Ass}
We begin with the estimates for the terms: 
$\displaystyle\inf_{\textbf{v}_h\in\textbf{H}_h}\parallel\textbf{u}-\textbf{v}_h\parallel_h \mbox{ and }
\displaystyle\inf_{q_h\in Q_h}\parallel p-q_h\parallel$
\begin{lem}(Ref. \cite{7}) \label{Leminterpolation} There hold:
	\begin{eqnarray}\label{Intpo1}
	\displaystyle\inf_{\textbf{v}_h\in\textbf{H}_h} \parallel 
	\textbf{u}-\textbf{v}_h\parallel_h&\lesssim& h\left( |\textbf{u}|_{2,s}+|\textbf{u}|_{2,d}\right),\\\label{Intpo2}
	\displaystyle\inf_{q_h\in Q_h}\parallel p-q_h\parallel &\lesssim& h\left( |p|_{1,s}+|p|_{1,d}\right).
	\end{eqnarray}
\end{lem}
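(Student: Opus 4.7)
The plan is to construct explicit approximants and bound each summand of the norm $\|\cdot\|_h$ separately. For the velocity I take $\mathbf{v}_h = \mathbf{r}_h \mathbf{u}$, the Crouzeix--Raviart interpolant defined by (\ref{Crouzei1})--(\ref{Crouzei2}). This is well defined under Assumption \ref{assumption} since $\mathbf{u}_s \in [H^2(\Omega_s)]^N$ and $\mathbf{u}_d \in [H^2(\Omega_d)]^N$ imply $\mathbf{u} \in \mathbf{W}$. For the pressure I take $q_h = P_h p$, the elementwise $L^2$-projection onto $\mathbb{P}^0$.

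For the pressure estimate (\ref{Intpo2}), I apply the standard Bramble--Hilbert bound $\|p - P_h p\|_{0,T} \lesssim h_T |p|_{1,T}$ on each $T$ and sum over $T \in \mathcal{T}_h^s \cup \mathcal{T}_h^d$. The global mean-zero constraint is handled by noting that $\int_\Omega P_h p \,dx = \int_\Omega p\,dx = 0$, so $P_h p \in Q_h$. This immediately yields $\|p - P_h p\|_\Omega \lesssim h(|p|_{1,s} + |p|_{1,d})$.

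For the velocity estimate (\ref{Intpo1}), I expand the squared norm into its five contributions and handle them one by one:
\begin{enumerate}
\item The piecewise $H^1$ seminorm $\sum_{T \in \mathcal{T}_h^s} |\mathbf{u} - \mathbf{r}_h\mathbf{u}|_{1,T}^2$ is controlled by the classical local Crouzeix--Raviart estimate $|\mathbf{u} - \mathbf{r}_h \mathbf{u}|_{1,T} \lesssim h_T |\mathbf{u}|_{2,T}$, obtained by a Bramble--Hilbert argument on the reference element together with shape-regularity.
\item The $L^2(\Omega_d)$ term and the elementwise divergence term are both handled by the analogous local $L^2$ estimate $\|\mathbf{u} - \mathbf{r}_h\mathbf{u}\|_{0,T} + h_T \|\mathrm{div}(\mathbf{u} - \mathbf{r}_h\mathbf{u})\|_{0,T} \lesssim h_T^2 |\mathbf{u}|_{2,T}$.
\item The tangential trace contribution on $\Gamma_I$ is bounded using the multiplicative trace inequality $\|w\|_{0,\partial T}^2 \lesssim h_T^{-1}\|w\|_{0,T}^2 + h_T |w|_{1,T}^2$ applied to $\mathbf{u}-\mathbf{r}_h\mathbf{u}$ on the elements touching $\Gamma_I$, then combined with the local estimates above.
\item The jump penalty $\mathbf{J}(\mathbf{u}-\mathbf{r}_h\mathbf{u},\mathbf{u}-\mathbf{r}_h\mathbf{u})$ uses the fact that $[\mathbf{u}]_E = 0$ (or $[\mathbf{u}\cdot\mathbf{n}_E]_E = 0$ on edges in $\mathcal{E}_h(\Omega_d)\cup\mathcal{E}_h(\partial\Omega_d)$) since $\mathbf{u} \in \mathbf{W}$. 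Hence the jumps of $\mathbf{u}-\mathbf{r}_h\mathbf{u}$ equal jumps of $-\mathbf{r}_h\mathbf{u}$, which have vanishing mean across each $E$ by construction. A Poincar\'e--Friedrichs inequality on $E$ combined with the trace inequality gives $h_E^{-1}\|[\mathbf{u}-\mathbf{r}_h\mathbf{u}]_E\|_{0,E}^2 \lesssim h_T |\mathbf{u}|_{2,\omega_E}^2$ where $\omega_E$ is the edge patch.
\end{enumerate}
Summing over all contributions yields $\|\mathbf{u}-\mathbf{r}_h\mathbf{u}\|_h \lesssim h(|\mathbf{u}|_{2,s} + |\mathbf{u}|_{2,d})$.

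The main obstacle will be the jump term, where one must carefully exploit the zero-mean property of the Crouzeix--Raviart interpolant on each face and combine a scaled trace inequality with the Bramble--Hilbert lemma on the reference element. Once this is done on the reference configuration and pulled back via affine equivalence, shape-regularity produces the correct power of $h$. The tangential boundary term on $\Gamma_I$ is analogous but uses the trace operator rather than jumps. Both reductions are standard (as in \cite{7}), so a careful enumeration of the summands in $\|\cdot\|_h^2$ suffices.
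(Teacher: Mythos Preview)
The paper does not supply its own proof of this lemma; it simply attributes the result to \cite{7} and moves on. Your argument is a correct and standard derivation of these Crouzeix--Raviart/piecewise-constant approximation estimates, and it is precisely the kind of proof one finds in \cite{7}: take $\mathbf{v}_h=\mathbf{r}_h\mathbf{u}$ and $q_h=P_hp$, then bound each summand of $\|\cdot\|_h$ via the local Bramble--Hilbert estimate together with the scaled trace inequality.

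One minor slip: in item~4 your local jump bound should read $h_E^{-1}\|[\mathbf{u}-\mathbf{r}_h\mathbf{u}]_E\|_{0,E}^2 \lesssim h_T^{2}\,|\mathbf{u}|_{2,\omega_E}^2$ (not $h_T$); the trace inequality gives $\|\mathbf{u}-\mathbf{r}_h\mathbf{u}\|_{0,E}^2\lesssim h_T^{3}|\mathbf{u}|_{2,T}^2$, and dividing by $h_E\sim h_T$ leaves $h_T^2$. This does not affect the final $O(h)$ bound after summation. Also, the Poincar\'e--Friedrichs step on $E$ is not really needed: since $\int_E(\mathbf{u}-\mathbf{r}_h\mathbf{u})=0$ from each side by the very definition of $\mathbf{r}_h$, the trace inequality applied directly to $\mathbf{u}-\mathbf{r}_h\mathbf{u}$ on the adjacent elements already suffices.
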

Finally, let us consider the term $A_h(\textbf{u}_h,\textbf{v}_h)+\textbf{b}_h(\textbf{v}_h,p_h-\tilde{L}(\textbf{v}_h).$
The smoothness assumption of $\textbf{u}$ implies $\textbf{J}(\textbf{u},\textbf{v}_h)=0$, thus 
$A_h(\textbf{u},\textbf{v}_h)=\tilde{\textbf{a}}(\textbf{u},\textbf{v}_h), \forall \textbf{v}_h\in \textbf{H}_h$.
Clearly, 
\begin{eqnarray*}
	-\tilde{L}(\textbf{v}_h)&=&-(\textbf{f},\textbf{v}_h)_{\Omega}-(g,\dive_h\textbf{v}_h)_{\Omega_d}\\
	&=&-(\textbf{f},\textbf{v}_h)_{\Omega_s}-(\textbf{f},\textbf{v}_h)_{\Omega_d}
	-(g,\dive_h\textbf{v}_h)_{\Omega_d}\\
	&=&\left(2\mu\mbox{div}\mbox{ }\textbf{D}(\textbf{u})-\nabla p,\textbf{v}_h\right)_{\Omega_s}-\left(
	\mu\textbf{K}^{-1}\textbf{u}+\nabla p,\textbf{v}_h\right)_{\Omega_d}
-(\dive \textbf{u},\dive_h\textbf{v}_h)_{\Omega_d}	\\
	&=&2\mu\left(\mbox{div}\mbox{ } \textbf{D}(\textbf{u}),\textbf{v}_h\right)_{\Omega_s}
	-\left(\nabla p,\textbf{v}_h\right)_{\Omega_s}
	-\mu\left(\textbf{K}^{-1}\textbf{u},\textbf{v}_h\right)_{\Omega_d}-\left(\nabla p,\textbf{v}_h\right)_{\Omega_d}\\
	&-&(\dive \textbf{u},\dive_h\textbf{v}_h)_{\Omega_d}\\
	&=& \displaystyle\sum_{T\in\mathcal{T}_h^s}
	\Big\{2\mu\left(\mbox{div}\mbox{ } \textbf{D}(\textbf{u}),\textbf{v}_h\right)_{T}
	-\left(\nabla p,\textbf{v}_h\right)_{T}\Big\}+\\
	&+&\displaystyle\sum_{T\in\mathcal{T}_h^d}
	\Big\{\mu\left(\textbf{K}^{-1}\textbf{u},\textbf{v}_h\right)_{T}-
	\left(\nabla p,\textbf{v}_h\right)_{T} \Big\}-(\dive \textbf{u},\dive_h\textbf{v}_h)_{\Omega_d}
\end{eqnarray*}
\begin{eqnarray*}
	&=& 
	\displaystyle\sum_{T\in\mathcal{T}_h^s}
	\Big\{-2\mu \left(\textbf{D}(\textbf{u}\right),\textbf{D}(\textbf{v}_h))_T+2\mu
	\left(\textbf{n}_T\cdot\textbf{D}(\textbf{u}),\textbf{v}_h\right)_{\partial T}+
	\left(p,\mbox{div}\mbox{ } \textbf{v}_h\right)_{T}-\left(\textbf{v}_h\cdot\textbf{n}_T,p\right)_{\partial T}\Big\}
	\\
	&+&\displaystyle\sum_{T\in\mathcal{T}_h^d}
	\Big\{-\mu \left(\textbf{K}^{-1}\textbf{u},\textbf{v}_h\right)_T+\left(p,\mbox{div}\mbox{ }\textbf{v}_h\right)_T-
	\left(\textbf{v}_h\cdot\textbf{n}_T,p\right)_{\partial T}\Big\}
	-(\dive \textbf{u},\dive_h\textbf{v}_h)_{\Omega_d}\\
	&=& -\left\{\displaystyle \sum_{T\in \mathcal{T}_h^s} 2\mu\left(\textbf{D}(\textbf{u}),\textbf{D}(\textbf{v}_h)\right)_T\right\}
	+\mu\left(\textbf{K}^{-1}\textbf{u},\textbf{v}_h\right)_{\Omega_d}-\left(p,\dive_h\textbf{v}_h\right)_{\Omega} -(\dive \textbf{u},\dive_h\textbf{v}_h)_{\Omega_d}\\
	&+&\displaystyle\sum_{T\in\mathcal{T}_h^s}\Big\{2\mu(\textbf{n}_T\cdot\textbf{D}(\textbf{u}),\textbf{v}_h)_{\partial T}
	-\left( \textbf{v}_h\cdot\textbf{n}_T,p\right)_{\partial T}\Big\}-
	\displaystyle\sum_{T\in\mathcal{T}_h^d}(\textbf{v}_h\cdot\textbf{n}_T,p)_{\partial T}\\
	&=&-\tilde{\textbf{a}}_h(\textbf{u},\textbf{v}_h)+\sum_{j=1}^{N-1}\frac{\mu\alpha_1}{\sqrt{\kappa_j}}
	(\textbf{u}_s\cdot\tau_j,\textbf{v}_{h,s}\cdot\tau_j)_{\Gamma_I}-\textbf{b}_h(\textbf{u},\textbf{v}_h)\\ 
	&+&2\mu\displaystyle\sum_{E\in\cE_h(\Omega_s^+)}\left(\textbf{n}_E\cdot\textbf{D}(\textbf{u}),[\textbf{v}_h]_E\right)_E-
	\displaystyle\sum_{E\in\cE_h(\Omega_d)\cup\cE_h(\partial \Omega_d)}\left([\textbf{v}_h\cdot\textbf{n}_E]_E,p_d\right)_E\\
	&-&\displaystyle\sum_{E\in\cE_h(\Omega_s^+)}\left([\textbf{v}_h\cdot\textbf{n}_E,p_s]_E\right)_E.
\end{eqnarray*}
Thus, we have
\begin{eqnarray}
\tilde{\textbf{a}}(\textbf{u},\textbf{v}_h)+\textbf{J}(\textbf{u},\textbf{v}_h)+\textbf{b}_h(\textbf{v}_h,p)-\tilde{L}(\textbf{v}_h)=
R_1(\textbf{v}_h)+R_2(\textbf{v}_h)+R_3(\textbf{v}_h)+R_4(\textbf{v}_h),
\end{eqnarray}
where
\begin{eqnarray*}
	R_1 (\textbf{v}_h)&=&\sum_{j=1}^{N-1}\frac{\mu\alpha_1}{\sqrt{\kappa_j}}
	(\textbf{u}_s\cdot\tau_j,\textbf{v}_{h,s}\cdot\tau_j)_{\Gamma_I},\\
	R_2(\textbf{v}_h)&=&2\mu\displaystyle\sum_{E\in\cE_h(\Omega_s^+)}\left(\textbf{n}_E\cdot\textbf{D}(\textbf{u}),[\textbf{v}_h]_E\right)_E,\\
	R_3 (\textbf{v}_h)&=&\displaystyle\sum_{E\in\cE_h(\Omega_d)\cup\cE_h
		(\partial \Omega_d)}\left([\textbf{v}_h\cdot\textbf{n}_E]_E,p_d\right)_E,\\
	R_4 (\textbf{v}_h)&=& \displaystyle\sum_{E\in\cE_h(\Omega_s^+)}\left([\textbf{v}_h\cdot\textbf{n}_E,p_s]_E\right)_E.
\end{eqnarray*}
In order to evaluate the four face integrals, let us introduce two projections operators in the following. 

For any $T\in\cT_h$ and $E\in\cE(T)$, denote by $P_0(E)$ the constant space of the restrictions to $E$ and 
$\pi_E$ the projection operator from $L^(E)$ on to $P_0(E)$ such that
\begin{eqnarray}\label{interpolation1}
	\int_E \pi_E v=\int_E vds.
\end{eqnarray}
The operator $\pi_E$ has the property \cite{CR:73}:
\begin{eqnarray}\label{interpolation2}
\parallel v-\pi_E\parallel_{0,E}\lesssim h_E^{1/2}|v|_{1,T} \forall v\in H^1(T).
\end{eqnarray}

For any $\textbf{v}\in [L^2(E)]^N$, we let $\Pi_E\textbf{v}$ be the function in $[P_0(E)]^N$ such that 
$$(\Pi_E\textbf{v})_i=\pi_E v_i, 1\leq i\leq N.$$
Using inequality (\ref{interpolation2}), we obtain
\begin{eqnarray}\label{interpolation3}
\parallel \textbf{v}-\Pi_E\textbf{v}\parallel_{0,E}\lesssim h_E^{1/2}|\textbf{v}|_{1,T} \mbox{  } 
\forall \textbf{v}\in [H^1(T)]^N.
\end{eqnarray}
Then we have the following lemma:
\begin{lem}(Estimation the four face integrals)\label{Eface} There holds:
	\begin{eqnarray}\label{e1}
	|R_1(\textbf{v}_h)|&\leq& \max_{1\leqslant j\leqslant N-1}\left(\frac{\mu\alpha_1}{\sqrt{\kappa_j}}\right)h
	\parallel\textbf{u}_s\parallel_{1,\Omega_s}\parallel\textbf{v}_h\parallel_h\\\label{e2}
	|R_2(\textbf{v}_h)| &\lesssim& |\textbf{u}|_{2,s}\parallel \textbf{v}_h\parallel_h\\\label{e3}
	|R_3(\textbf{v}_h)|&\lesssim& h\left(|p|_{1,s}+|p|_{1,d}\right)\parallel\textbf{v}_h\parallel_h\\\label{e4}
	|R_4(\textbf{v}_h)|&\lesssim& h(|p|_{1,d})\parallel\textbf{v}_h\parallel_h.
	\end{eqnarray}
\end{lem}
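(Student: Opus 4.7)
The plan is to handle each of the four face integrals by the standard Crouzeix--Raviart consistency-error trick: use the edge zero-mean properties built into $\textbf{H}_h$ to insert, for free, an edgewise $L^2$ projection ($\pi_E$ or $\Pi_E$) of the smooth factor, then apply Cauchy--Schwarz edge by edge and invoke the projection error bounds (\ref{interpolation2})--(\ref{interpolation3}). The resulting $h_E^{1/2}|\cdot|_{1,T}$ factor is matched against the $h_E^{-1/2}\|[\textbf{v}_h]_E\|_{0,E}$ factor contained in the jump seminorm $\textbf{J}(\cdot,\cdot)^{1/2}$ sitting inside $\|\textbf{v}_h\|_h$, which delivers a global constant times $\|\textbf{v}_h\|_h$.

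For $R_2$ I would use that for every $E\in\cE_h(\Omega_s^+)$ the definition of $\textbf{H}_h$ gives $([\textbf{v}_h]_E,\textbf{1})_E=0$, and therefore
\begin{equation*}
R_2(\textbf{v}_h)=2\mu\sum_{E\in\cE_h(\Omega_s^+)}\bigl(\textbf{n}_E\!\cdot\!\textbf{D}(\textbf{u})-\Pi_E(\textbf{n}_E\!\cdot\!\textbf{D}(\textbf{u})),[\textbf{v}_h]_E\bigr)_E.
\end{equation*}
Applying (\ref{interpolation3}) on every $E$, then Cauchy--Schwarz over edges and shape-regularity (so that $\sum_{E\subset\partial T}h_E\lesssim h_T$), I expect to recover an estimate of the form $h|\textbf{u}|_{2,s}\|\textbf{v}_h\|_h$; the statement (\ref{e2}) seems to be missing a factor $h$ but the argument is the same. $R_3$ is treated identically using $\pi_E$ applied to $p_d$ (together with $([\textbf{v}_h\cdot\textbf{n}_E]_E,1)_E=0$ for $E\in\cE_h(\Omega_d)\cup\cE_h(\partial\Omega_d)$) and (\ref{interpolation2}); the jump factor is absorbed into $\textbf{J}_{\Omega_d}+\textbf{J}_{\partial\Omega_d}\le\|\textbf{v}_h\|_h^2$. $R_4$ is treated analogously on $\cE_h(\Omega_s^+)$, inserting $p_s-\pi_E p_s$ and using $\textbf{J}_{\Omega_s^+}(\textbf{v}_h,\textbf{v}_h)^{1/2}\le\|\textbf{v}_h\|_h$.

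For $R_1$ the situation is genuinely different and I expect it to be the main obstacle: $\Gamma_I\subset\partial\Omega_d$, so the only zero-mean property at our disposal on edges of $\Gamma_I$ is for the \emph{normal} component of $\textbf{v}_h$, whereas $R_1$ pairs \emph{tangential} components. The plan is to apply Cauchy--Schwarz directly on $\Gamma_I$,
\begin{equation*}
|R_1(\textbf{v}_h)|\le\max_j\tfrac{\mu\alpha_1}{\sqrt{\kappa_j}}\sum_{j=1}^{N-1}\|\textbf{u}_s\!\cdot\!\tau_j\|_{0,\Gamma_I}\,\|\textbf{v}_{h,s}\!\cdot\!\tau_j\|_{0,\Gamma_I},
\end{equation*}
and then use the continuous trace inequality $\|\textbf{u}_s\cdot\tau_j\|_{0,\Gamma_I}\lesssim\|\textbf{u}_s\|_{1,\Omega_s}$, while $\|\textbf{v}_{h,s}\cdot\tau_j\|_{0,\Gamma_I}\le\|\textbf{v}_h\|_h$ by the very definition of the discrete norm. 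Recovering the extra $h$ announced in (\ref{e1}) would require an additional argument edge by edge: since $\textbf{u}\in[H^2(\Omega_s)]^N$ by Assumption~\ref{assumption}, one can replace $\textbf{u}_s\cdot\tau_j$ by its edgewise $L^2$ projection on every $E\in\cE_h(\Gamma_I)$ and invoke (\ref{interpolation3}) to extract an $h_E^{1/2}$ factor, after which Cauchy--Schwarz over the $O(h^{-(N-1)})$ edges of $\Gamma_I$ produces the global $h$. Combining the four bounds completes the proof.
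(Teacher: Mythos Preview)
Your treatment of $R_2$, $R_3$, $R_4$ is exactly the paper's: insert the edgewise constant projection $\pi_E$ or $\Pi_E$ via the zero-mean conditions built into $\textbf{H}_h$, apply (\ref{interpolation2})--(\ref{interpolation3}), and absorb the remaining $h_E^{-1/2}\|[\textbf{v}_h]\|_{0,E}$ factors into $\textbf{J}(\textbf{v}_h,\textbf{v}_h)^{1/2}\le\|\textbf{v}_h\|_h$. You are also right that the displayed bound (\ref{e2}) is missing an $h$; the paper's own computation for $R_2$ ends with $h|\textbf{u}|_{2,s}\|\textbf{v}_h\|_h$.

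For $R_1$ your first step (Cauchy--Schwarz on $\Gamma_I$, continuous trace for $\textbf{u}_s$, and $\|\textbf{v}_{h,s}\cdot\tau_j\|_{0,\Gamma_I}\le\|\textbf{v}_h\|_h$) is again exactly what the paper does. The difficulty you correctly flag---how to manufacture the extra $h$---is real, and your proposed fix has a genuine gap: replacing $\textbf{u}_s\cdot\tau_j$ by $\pi_E(\textbf{u}_s\cdot\tau_j)$ on each interface edge is only free if $\int_E\textbf{v}_{h,s}\cdot\tau_j=0$, and for $E\in\cE_h(\Gamma_I)\subset\cE_h(\partial\Omega_d)$ the space $\textbf{H}_h$ only enforces zero mean of the \emph{normal} jump, not of the tangential trace. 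So the projection cannot be inserted, and the subsequent edge-counting argument does not recover $h$ either. In fairness, the paper's own proof of (\ref{e1}) simply writes the factor $h$ in the last line without any mechanism producing it; you have identified, rather than created, this gap.
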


\begin{proof}\mbox{  }
\begin{enumerate}
	\item Estimate (\ref{e1}): We begin with an estimate for the first term $R_1(\textbf{v}_h)$. For any 
	face $E\in\cE_h(\Omega_s^{+})$, there exists at least one element $T\in\cT_h^s$
	 such that $E\in\cE(T)$. Then, from condition (\ref{interpolation1}), H$\ddot{o}$der's inequality and inequality  (\ref{interpolation3}), it follows that
	\begin{eqnarray*}
		|R_1(\textbf{v}_h)|&\leqslant& \sum_{j=1}^{N-1}\frac{\mu\alpha_1}{\sqrt{\kappa_j}}\left(\int_{\Gamma_I} |\textbf{u}_s\cdot\tau_j|^2\right)^{1/2} \left(\int_{\Gamma_I}
		|\textbf{v}_{h,s}\cdot\tau_j|^2\right)^{1/2}\\
		&\leqslant& \sum_{j=1}^{N-1}\frac{\mu\alpha_1}{\sqrt{\kappa_j}}\parallel\textbf{u}_s\parallel_{1,\Omega_s}
		\left(\int_{\Gamma_I}|\textbf{v}_{h,s}\cdot\tau_j|^2\right)^{1/2}\\
		&\leqslant& \max_{1\leqslant j\leqslant N-1}\left(\frac{\mu\alpha_1}{\sqrt{\kappa_j}}\right)h
		\parallel\textbf{u}_s\parallel_{1,\Omega_s}\parallel\textbf{v}_h\parallel_h.
	\end{eqnarray*}
\item Estimate (\ref{e2}): \\ We have
$\textbf{n}_E\cdot\textbf{D}(\textbf{u})_{|E}\in [L^2(E)]^N$, hence
$\Pi_E(\textbf{n}_E\cdot\textbf{D}(\textbf{u}))\in P_0(E)^N$.\\
\begin{eqnarray}
\int_E\Pi_E(\textbf{n}_E\cdot  \textbf{D}(\textbf{u}))\cdot[\textbf{v}_h]_E&=& 
\Pi_E(\textbf{n}_E\cdot \textbf{D}(\textbf{u}))\int_E[\textbf{v}_h]_E=0.
\end{eqnarray}
Thus,
\begin{eqnarray*}
	\int_E\textbf{n}_E\cdot \textbf{D}(\textbf{u})\cdot[\textbf{v}_h]_E&=& 
	\int_E\left( \textbf{n}_E\cdot \textbf{D}(\textbf{u})-
	\Pi_E(\textbf{n}_E\cdot \textbf{D}(\textbf{u})\right)\cdot[\textbf{v}_h]_E\\
	&=& \int_E(I-\Pi_E)(\textbf{n}_E\cdot \textbf{D}(\textbf{u}))\cdot [\textbf{v}_h]_E\\
	&\lesssim&\parallel h_E^{1/2}(I-\Pi_E)(\textbf{n}_E)\cdot \textbf{D}(\textbf{u}) \parallel_E
	\parallel h_E^{-1/2}[\textbf{v}_h]_E\parallel_E\\
	&\lesssim& h_E |\textbf{D}(\textbf{u})|_{1,T}h_E^{-1/2} \parallel[\textbf{v}_h]_E\parallel_E.
\end{eqnarray*}
Furthermore, summing on $E\in \cE_h(\Omega_s^+)$ faces, we obtain the estimate:
\begin{eqnarray}
|R_2(\textbf{v}_h)|&\lesssim& h|\textbf{u}|_{2,s} \parallel \textbf{v}_h\parallel_h.
\end{eqnarray}
\item For the terms $R_3(\textbf{v}_h)$ and $R_4(\textbf{v}_h)$, we use the same techniques as in the proof of the bounds for $R_i(\textbf{v}_h)$, $i\in\{1,2\}$, and we obtain:
\begin{eqnarray*}
	|R_3(\textbf{v}_h)|&\lesssim& h\left( |p|_{1,s}+|p|_{1,d}\right)\parallel \textbf{v}_h\parallel_h,\\
	|R_4(\textbf{v}_h)|&\lesssim& h\left(|p|_{1,d}\right) \parallel \textbf{v}_h\parallel_h.
\end{eqnarray*}
\end{enumerate}
The proof is complete.
\end{proof}

From Lemma \ref{Strang}, Lemma \ref{Leminterpolation} and Lemma \ref{Eface}, now we derive the following convergence theorem:
\begin{thm}
Let the solution $(\textbf{u},p)$ of problem (\ref{modified}) satifies the smoothness assumption (Assumption \ref{assumption}). Let $(\textbf{u}_h,p_h)$ be the solution of the discrete problem (\ref{PD}). Then there exists a positive constant $C$ depending on $N$, $\mu$, $K_{*}$, $K^{*}$, $\alpha_1$ and $\sigma$ such that:
\begin{eqnarray}
\parallel \textbf{u}-\textbf{u}_h\parallel_h+\parallel p-p_h\parallel\leq
Ch\left(|\textbf{u}|_{2,s}+|\textbf{u}_{2,d}+|p|_{1,s}+|p|_{1,d}\right).
\end{eqnarray}
\end{thm}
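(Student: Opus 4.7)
The plan is to assemble the estimate directly from the three preceding lemmas. I would start by invoking the Strang-type abstract bound in Lemma \ref{Strang}, which reduces the total error $\parallel \textbf{u}-\textbf{u}_h\parallel_h+\parallel p-p_h\parallel$ to the sum of two best-approximation terms plus the two consistency terms $E_{1h}$ and $E_{2h}$. The discrete divergence operator $\dive_h$ coincides with $\dive$ on $\textbf{H}$, and the source equation $\textbf{b}_h(\textbf{u},q_h)=-\int_\Omega q_h\,\dive\textbf{u} = (g,q_h)_{\Omega_s}+(g,q_h)_{\Omega_d} - (\dive\textbf{u},q_h)_{\Omega_d} + (\dive\textbf{u},q_h)_{\Omega_d}$ together with the definition of $G$ gives $E_{2h}=0$, as already noted in the excerpt.

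Next, I would dispose of the two infimum terms by applying Lemma \ref{Leminterpolation} directly, which gives
\[
\inf_{\textbf{v}_h\in\textbf{H}_h}\parallel \textbf{u}-\textbf{v}_h\parallel_h \lesssim h\bigl(|\textbf{u}|_{2,s}+|\textbf{u}|_{2,d}\bigr),\qquad
\inf_{q_h\in Q_h}\parallel p-q_h\parallel \lesssim h\bigl(|p|_{1,s}+|p|_{1,d}\bigr),
\]
which are already of the desired order $O(h)$ and contribute all the Sobolev seminorms appearing on the right-hand side.

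The remaining work is to bound $E_{1h}$. Using the smoothness assumption on $\textbf{u}$ one has $\textbf{J}(\textbf{u},\textbf{v}_h)=0$, so $A_h(\textbf{u},\textbf{v}_h)=\tilde{\textbf{a}}_h(\textbf{u},\textbf{v}_h)$. I would then reuse the integration-by-parts computation carried out just before Lemma \ref{Eface}, which rewrites the numerator of $E_{1h}$ as $R_1(\textbf{v}_h)+R_2(\textbf{v}_h)+R_3(\textbf{v}_h)+R_4(\textbf{v}_h)$. Dividing by $\parallel \textbf{v}_h\parallel_h$ and applying the four bounds in Lemma \ref{Eface} yields
\[
E_{1h} \lesssim h\bigl(\parallel \textbf{u}_s\parallel_{1,\Omega_s}+|\textbf{u}|_{2,s}+|p|_{1,s}+|p|_{1,d}\bigr),
\]
which is again of order $h$ with a constant depending only on $N$, $\mu$, $\alpha_1$, $K_\ast$, $K^\ast$ and the shape regularity parameter $\sigma$. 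Combining the three contributions gives the claimed estimate.

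The only real subtlety is bookkeeping: one must check that the constants hidden in $\lesssim$ indeed depend only on the quantities listed in the statement, and that the $\parallel \textbf{u}_s\parallel_{1,\Omega_s}$ factor appearing in the bound on $R_1$ can be absorbed into the $|\textbf{u}|_{2,s}$ term (using a standard Sobolev/trace embedding together with the already-controlled solution norm from the well-posedness theorem). Beyond this, the proof is purely a matter of substitution into the abstract bound of Lemma \ref{Strang}, so I do not anticipate a genuine obstacle.
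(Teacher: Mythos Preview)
Your proposal is correct and follows exactly the paper's approach: the paper derives the theorem in one line ``From Lemma \ref{Strang}, Lemma \ref{Leminterpolation} and Lemma \ref{Eface}'' and the decomposition of the consistency numerator into $R_1+R_2+R_3+R_4$ that you invoke is precisely the computation carried out in the paper just before Lemma \ref{Eface}. The bookkeeping subtlety you flag about absorbing $\parallel\textbf{u}_s\parallel_{1,\Omega_s}$ from the $R_1$ bound into the stated right-hand side is not addressed in the paper either; your suggested fix via the well-posedness estimate is a reasonable way to close that gap.
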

\section{Numerical experiments}\label{test}
In this section we present one test case to verify the predicted rates of convergence. 
The numerical simulations have been performed on the finite element code FreeFem++ \cite{HP:FreeFem, FH:98} in isotropic coupled mesh of Fig. \ref{TriC}. The solutions have been represented by Mathematica software.
For simplicity we choose each domain $\Omega_l$, $l\in\{ s,d\}$ as the unit square, $\alpha_1=\mu=1$, and the permeability tensor $\textbf{K}$ is taken to be the identity. The interface $\Gamma_I$, is the line $x=1$, i.e. 
$\overline{\Omega}=[0,1[\cup\{1\}\cup ]1,2]$ like the show the Figure \ref{Domainenumerique}.
\begin{figure}[http]
	\begin{center}
		\begin{tikzpicture}[scale=1.5]
		\draw (0,0)--(2,0); 
		\draw (0,0) node [ left  ] {$(0,0)$};
		\draw (1.7,-0.4) node [ above right] {$(1,0)$};
		\draw (4,-0.1) node [ above right ] {$(2,0)$};
		\draw (4,2.) node [ above right] {$(2,1)$};
		\draw (1.8,2.) node [ above right ] {$(1,1)$};
		\draw (0,2.) node [ above left ] {$(0,1)$};
		\draw (2,0)--(4,0);
		\draw (4,0)--(4,2);
		\draw (4,2)--(2,2);
		\draw (2,2)--(2,0);
		\draw (2,2)--(0,2);
		\draw (0,0)--(0,2);
		\draw (2,2)--(2,0);
		\draw (1,1) node {$\Omega_s$};
		\draw (3,1) node {$\Omega_d$};
		\end{tikzpicture}
	\end{center}
	\caption{\footnotesize{The domain $\Omega$ in $2d$.}}
	\label{Domainenumerique}
\end{figure}
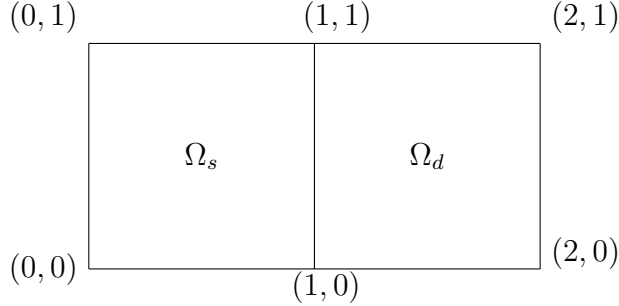\newline
We consider the application $\phi: (x,y)\in\mathbb{R}^2\longmapsto\phi(x,y)= x^2(x-1)^3y^2(y-1)^2\in\mathbb{R}$  on
the square  $\Omega=]0,1[^2\cup ]1,2[^2$. In $\Omega$, we define $\textbf{u}=(u_1,u_2)=\curl \phi 
\left(-\frac{\partial \phi}{\partial y}, \frac{\partial \phi}{\partial x}\right)$ and we obtain:
\begin{eqnarray}
u_1(x,y)&:=& -2(-1+x)^3x^2(-1+y)y(-1+2y)\\
u_2(x,y)&:=&(-1+x)^2x(-2+5x)(-1+y)^2y^2
\end{eqnarray}
We choose quadratic pressure $p\in L^2(\Omega)$ by
\begin{eqnarray}
p(x,y)&=& x^2-2xy+\frac{y^2}{2}-1.
\end{eqnarray}
Thus,
\begin{eqnarray}
\int_{\Omega} p(x,y) dxdy = 0 \hspace*{0.2cm} \mbox{  and   }\nabla p=\left(2x-2y,-2x+y\right).
\end{eqnarray}
The exact solution $(\textbf{u},p)$  satifies the following condition:
\begin{eqnarray}
\mbox{ div } \textbf{u}&=& 0 =g\mbox{  in  } \Omega,\\
\textbf{u}& = &\textbf{0} \mbox{  on } \partial \Omega,
\end{eqnarray}
and the Beavers-Joseph-Saffman interface conditions on $\Gamma_I$ [$\Gamma_I: x=1$]:
\begin{eqnarray}
\textbf{u}_s\cdot\textbf{n}_s+\textbf{u}_d\cdot\textbf{n}_d&=& 0\mbox{  on  } \Gamma_I,\\
p_s-2\mu\textbf{n}_s\cdot\textbf{D}(\textbf{u}_s)\cdot\textbf{n}_s&=&p_d\mbox{   on  }\Gamma_I,\\
\frac{\sqrt{k_j}}
{\alpha_1}2\textbf{n}_s\cdot\textbf{D}(\textbf{u}_s)\cdot\tau_j &=&-\textbf{u}_s\cdot\tau_j
\mbox{  on } \Gamma_I,
\mbox{    } j=1,\ldots,N-1.                                                                                                             
\end{eqnarray}
Furthermore, we obtain the right-hand terms $\textbf{f}$ define by

\begin{eqnarray}
\left\{
\begin{array}{ccccccccc}\label{r}
\textbf{f}_s &=& &-2\mu\dive \textbf{D}(\textbf{u})+ \nabla p& &\mbox{ in }&  &\Omega_s,&\\
\textbf{f}_d&=& &\mu \textbf{K}^{-1}\mathbf{u}+\nabla p& &\mbox{ in }&   &\Omega_d&.
\end{array}
\right.
\end{eqnarray}
Thus,  $\textbf{f}_s(x,y)=(f_1(x,y),f_2(x,y))$  in $\Omega_s$ leads to
\begin{eqnarray*}
	f_1(x,y)&=&4(-1+x)(-1+2y)(-6x^3+3x^4
	+(-1+y)y-8x(-1+y)y,\\
	&+&x^2(3+10(-1+y)y))+2x-2y,\\
	f_2(x,y)&=&-2\left(9(-1+y)^2y^2-12x^3(1+6(-1+y)y)+5x^4(1+6(-1+y)y)\right.\\
	&-&\left.2x(1+6(-1+y)y(1+3(-1+y)y))+x^2(9+6(-1+y)y(9+5(-1+y)y))\right)\\
	&-&2x+y,
\end{eqnarray*}
and in $\Omega_d$,  $\textbf{f}_d(x,y)=(k_1(x,y),k_2(x,y))$ is given by:
\begin{eqnarray*}
	k_1(x,y)&=& (-1+x)^2x(-2+5x)(-1+y)^2y^2+2x-2y,\\
	k_2(x,y)&=& (-1+x)^2x(-2+5x)(-1+y)^2y^2-2x+y.
\end{eqnarray*}
\mbox{}\\
\begin{figure}[http]
	\begin{minipage}[c]{.49\textwidth}
		\centering
		\begin{center}
			\includegraphics[width=1\textwidth,angle=0]{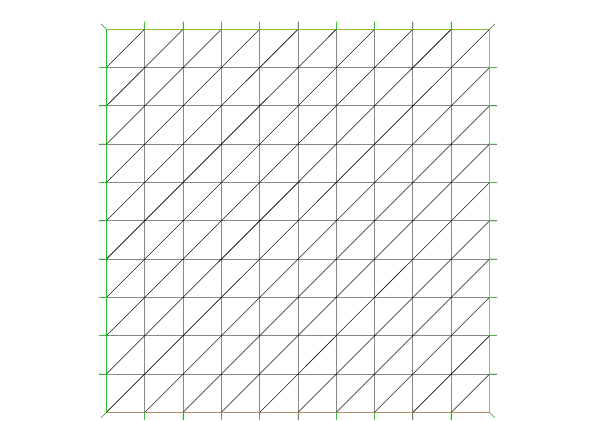}
		\end{center}
		\caption{\footnotesize{Example of isotropic mesh in $2d$}}
	\end{minipage}
	%\hspace*{0.1cm}
	\begin{minipage}[c]{.49\textwidth}
		\centering
		\begin{center}
			\includegraphics[width=1\textwidth]{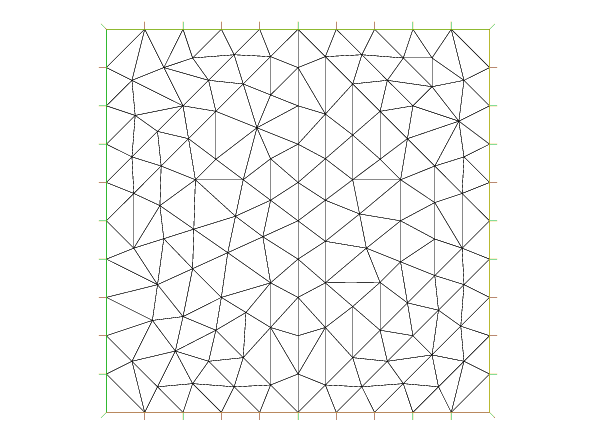}
		\end{center}
		\caption{\footnotesize{Example of anisotropic mesh in $2d$}}
	\end{minipage}
\end{figure}
\begin{figure}[http]
	\begin{center}
		\includegraphics[width=1.\textwidth]{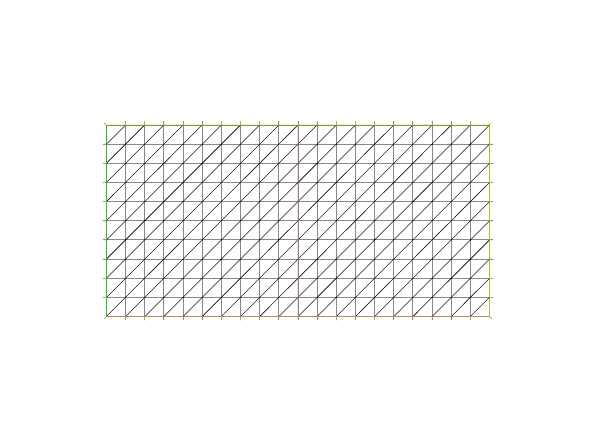}
		%\end{center}
		\caption{\footnotesize{Isotropic mesh on coupled domain $\Omega\subset\mathbb{R}^2$}}
		\label{TriC}
	\end{center}
\end{figure}

\begin{figure}[http]
	\begin{minipage}[c]{.49\textwidth}
		\centering
		\begin{center}
			\includegraphics[width=.6\textwidth,angle=0]{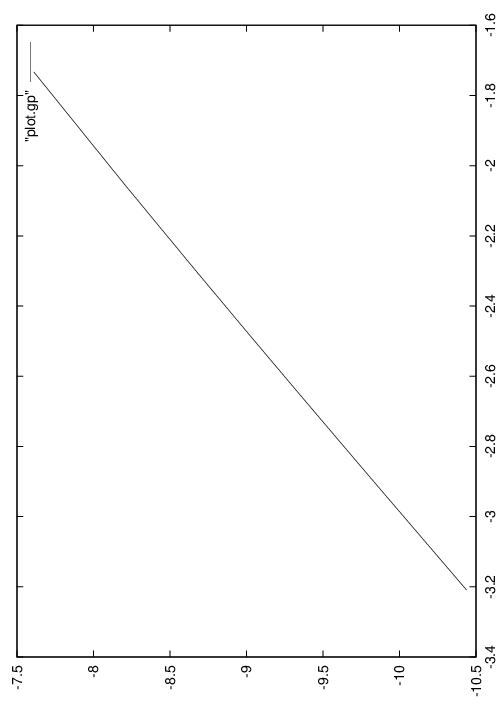}
		\end{center}
		\caption{\footnotesize{Error for the velocity $\parallel \textbf{u}-\textbf{u}_h\parallel_h$ in $\Omega_s$ ( log/log plot)}}
		\label{EVS}
	\end{minipage}
%	\hspace*{0.4cm}
	\begin{minipage}[c]{.49\textwidth}
		\centering
		\begin{center}
			\includegraphics[width=.6\textwidth]{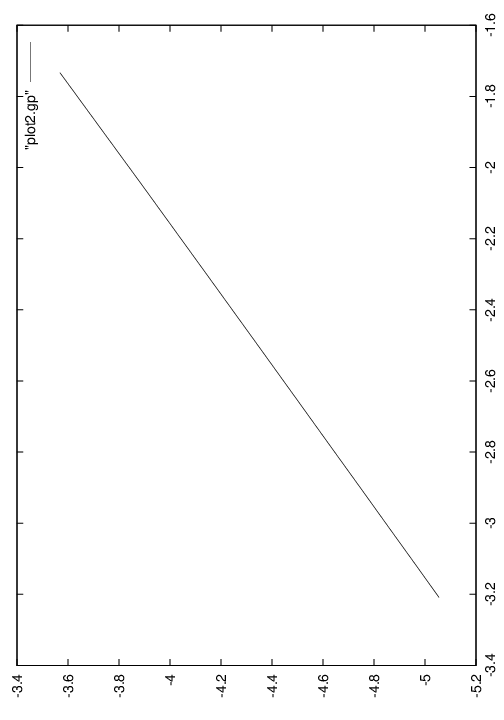}
		\end{center}
		\caption{\footnotesize{Error for the pressure $\parallel p-p_h\parallel $ in $\Omega_s$ (log/log plot)}}
		\label{EPS}
	\end{minipage}
\end{figure}
\begin{figure}[http]
	\begin{minipage}[c]{.49\textwidth}
		\centering
		\begin{center}
			\includegraphics[width=.6\textwidth,angle=0]{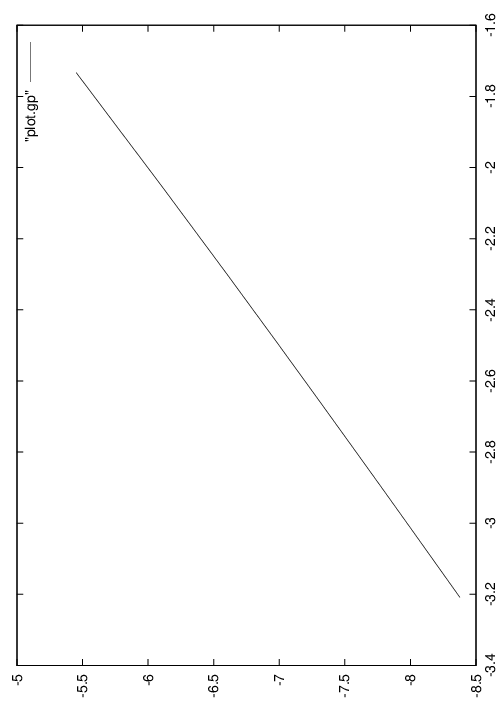}
		\end{center}
		\caption{\footnotesize{Error for the velocity $\parallel\textbf{u}-\textbf{u}_h\parallel_h$ in $\Omega_d$ (log/log plot)}}
		\label{EVD}
	\end{minipage}
	%\hspace*{0.4cm}
	\begin{minipage}[c]{.49\textwidth}
		\centering
		\begin{center}
			\includegraphics[width=.6\textwidth]{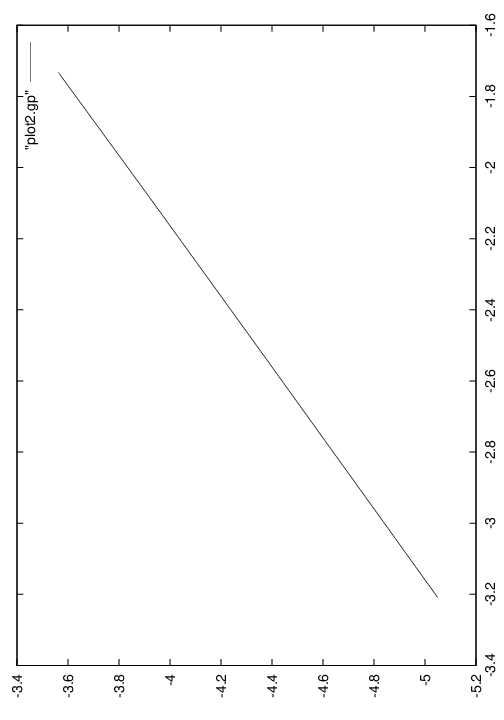}
		\end{center}
		\caption{\footnotesize{Error for the pressure $\parallel p-p_h\parallel$ in $\Omega_d$ (log/log plot)}}
		\label{EPD}
	\end{minipage}
\end{figure}
%\section{Les courbes d'isovaleures}\label{seca3}
%\mbox{}\\
\begin{figure}[http]
	\begin{minipage}[c]{.49\textwidth}
		\centering
		\begin{center}
			\includegraphics[width=.8\textwidth]{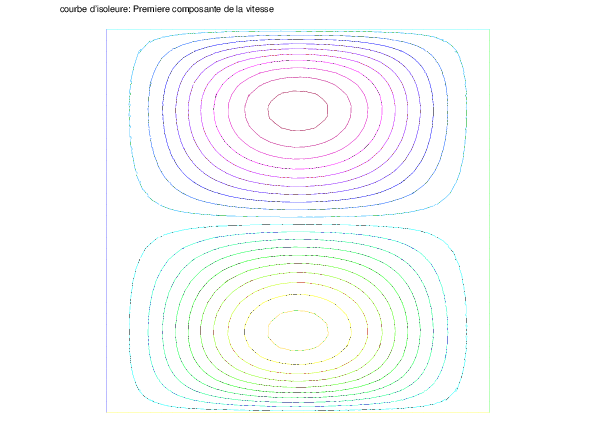}
		\end{center}
		\caption{\footnotesize{The isovalue of the first velocity component $u_1$ in $\Omega_s$.}}
	\end{minipage}
	%\hspace*{0.4cm}
	\begin{minipage}[c]{.49\textwidth}
		\centering
		\begin{center}
			\includegraphics[width=.8\textwidth]{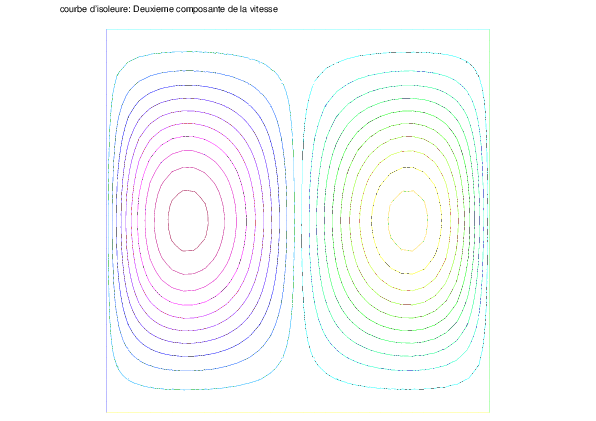}
		\end{center}
		\caption{\footnotesize{The isovalue of the second velocity component $u_2$ in $\Omega_s$}}
	\end{minipage}
\end{figure}

\begin{figure}[http]
	\begin{minipage}[c]{.49\textwidth}
		\centering
		\begin{center}
			\includegraphics[width=.8\textwidth]{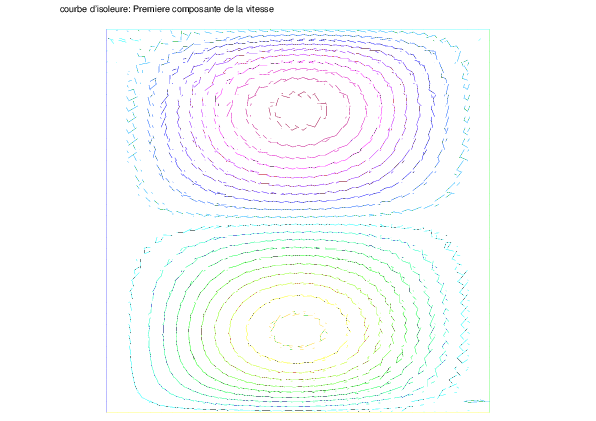}
		\end{center}
		\caption{\footnotesize{The isovalue of the first velocity component $u_1$ in $\Omega_d$}}
	\end{minipage}
%	\hspace*{0.4cm}
	\begin{minipage}[c]{.49\textwidth}
		\centering
		\begin{center}
			\includegraphics[width=.8\textwidth]{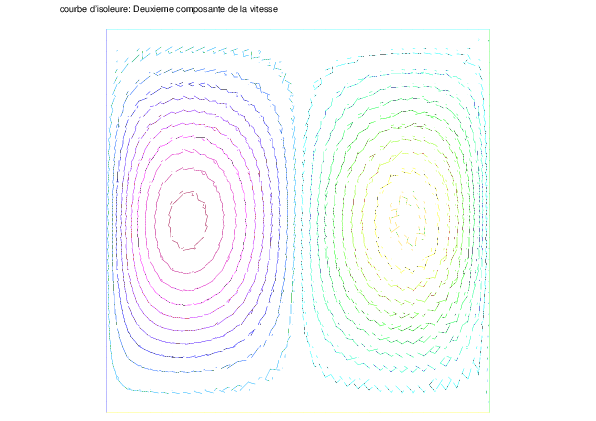}
		\end{center}
		\caption{\footnotesize{The isovalue of the second velocity component $u_2$ in $\Omega_d$ }}
	\end{minipage}
\end{figure}

\begin{table}[htbp]
	\begin{center}
		\begin{tabular}{|l|l|}
			\hline 
			Parameter $h$ & $^o/_{o}$ of Coefs. $\neq$ 0 \\
			\hline
			1/16 & 79.668827 \\
			\hline 
			1/22 & 66.405634\\
			\hline
			1/28 & 41.751001\\
			\hline
			1/34 & 28.652204\\
			\hline
			1/40 & 20.873193\\
			\hline
			1/46 & 15.879941\\
			\hline
			1/52 & 12.485349\\
			\hline 
			1/58 & 10.073294\\
			\hline 
			1/66 & 08.298201\\
			\hline
			1/70 & 06.954061\\
			\hline
		\end{tabular}
	\end{center}
	\caption{ Structure of rigidity Matrix on $10$ iterations.}
	\label{Rigidite} 
\end{table}
%Nous représentons dans cette section les solutions exactes à savoir: la vitesse $\textbf{u}$ et la %pression $p$. 
%Les seconds membres $\textbf{f}_s$ et $\textbf{f}_d$ seront aussi représentés.
\begin{figure}[http]
	\begin{minipage}[c]{.49\textwidth}
		\centering
		\begin{center}
			\includegraphics[height= 4cm, width=4cm]{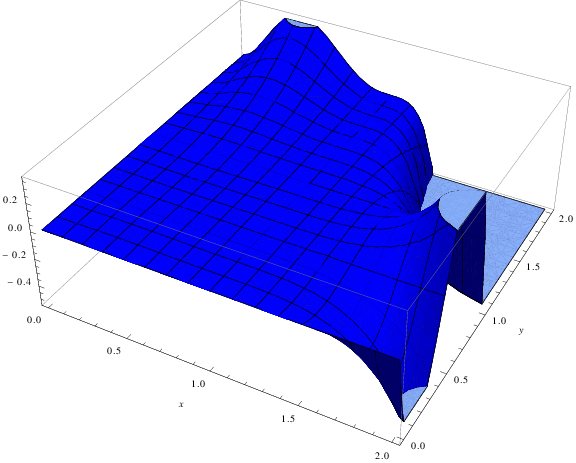}
		\end{center}
		\caption{\footnotesize{Component $u_1$ in $\Omega$.}}
	\end{minipage}
	%\hspace*{0.4cm}
	\begin{minipage}[c]{.49\textwidth}
		\centering
		\begin{center}
			\includegraphics[height= 4cm, width=4cm]{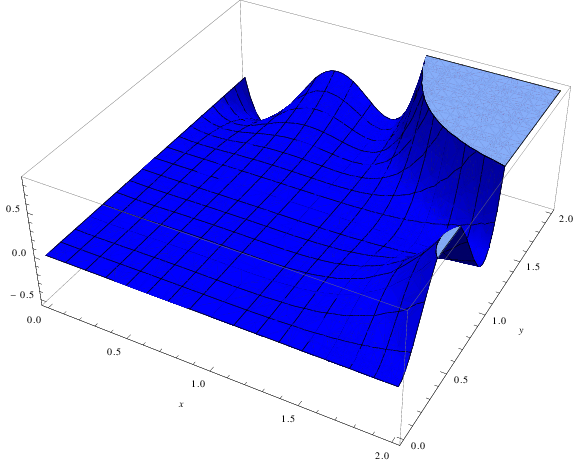}
		\end{center}
		\caption{\footnotesize{Component $u_2$ in $\Omega$.}}
	\end{minipage}
\end{figure}

\begin{figure}[http]
	%\begin{minipage}[c]{.49\textwidth}
	\centering
	\begin{center}
		\includegraphics[height= 4cm, width=4cm]{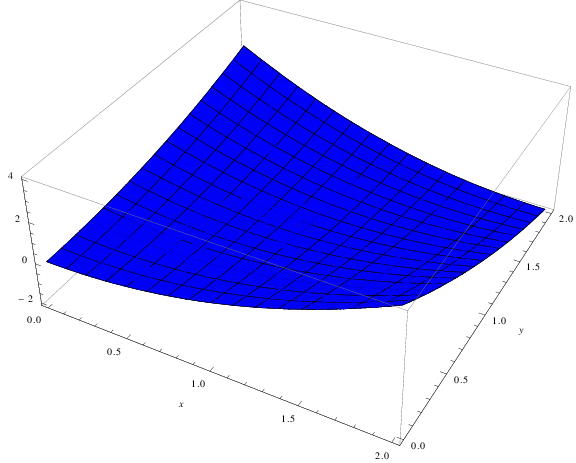}
	\end{center}
	\caption{\footnotesize{Pressure $p$ in $\Omega$.}}
\end{figure}
\begin{figure}[http]
	\begin{minipage}[c]{.49\textwidth}
		\centering
		\begin{center}
			\includegraphics[height= 4cm, width=4cm]{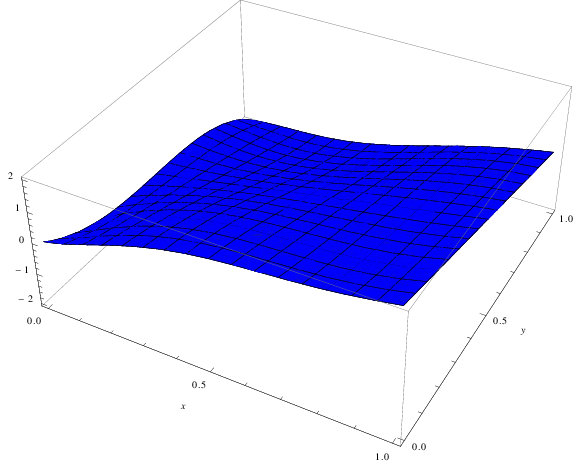}
		\end{center}
		\caption{\footnotesize{Right-hand term $f_1$ in $\Omega_s$.}}
	\end{minipage}
	%\hspace*{0.4cm}
	\begin{minipage}[c]{.49\textwidth}
		\centering
		\begin{center}
			\includegraphics[height= 4cm, width=4cm]{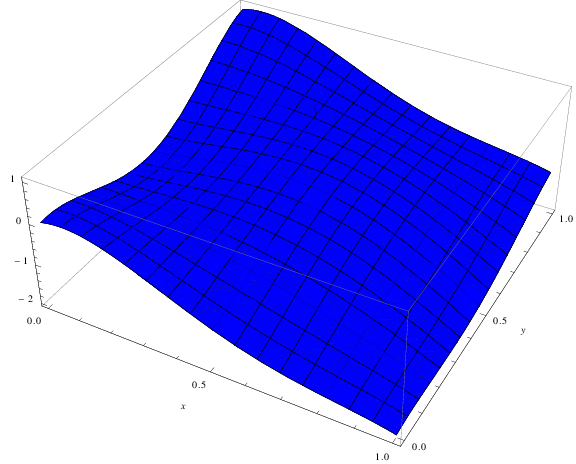}
		\end{center}
		\caption{\footnotesize{Right-hand term $f_2$ in $\Omega_s$.}}
	\end{minipage}
\end{figure}
\begin{figure}[http]
	\begin{minipage}[c]{.49\textwidth}
		\centering
		\begin{center}
			\includegraphics[height= 4cm, width=4cm]{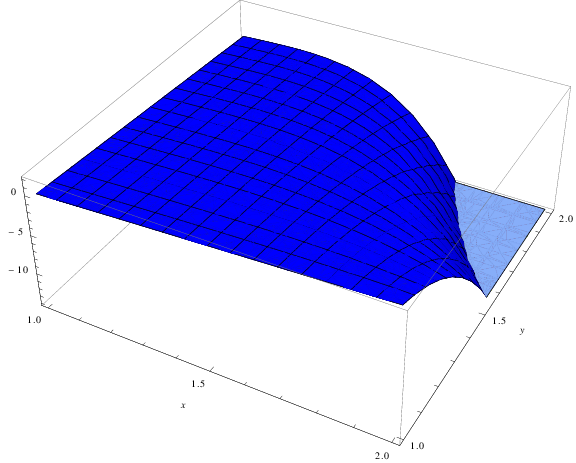}
		\end{center}
		\caption{\footnotesize{Right-hand term $k_1$ in $\Omega_d$.}}
	\end{minipage}
	%\hspace*{0.4cm}
	\begin{minipage}[c]{.49\textwidth}
		\centering
		\begin{center}
			\includegraphics[height= 4cm, width=4cm]{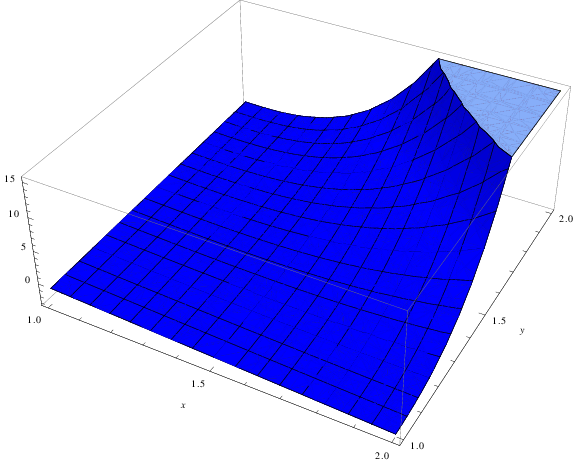}
		\end{center}
		\caption{\footnotesize{Right-hand term $k_2$ in $\Omega_d$.}}
	\end{minipage}
\end{figure}
\section{Conclusion}
In this contribution, we investigated a new mixed finite element method to solve the Stokes-Darcy fluid flow model without introducing any Lagrange multiplier. We proposed a modification of the Darcy problem which allows us to apply a slight variant nonconforming Crouzeix-Raviart element to the whole coupled Stokes-Darcy problem. The proposed method is probably one the cheapest method for Discontinuous Galerkin $(DG)$ approximation of the coupled system, has optimal accuracy with respect to solution regularity, and has simple and straightforward implementations. Numerical experiments have been also presented, which confirm the excellent stability and accuracy of our method.
\section{Acknowledgment}
The author thanks Professor Emmanuel Creus\'e (University of  Lille 1, France) for having sent us useful documents and for fruitful discussions concerning the numerical tests.

\end{Large}
\end{document}